\newtheorem{theorem}{Theorem}[section]
\newtheorem{lemma}[theorem]{Lemma}
\newtheorem{remark}[theorem]{Remark}
\theoremstyle{definition}
\numberwithin{equation}{section}
\newcommand{\bc}{\begin{center}}
\newcommand{\ec}{\end{center}}
\newcommand{\be}{\begin{eqnarray}}
\newcommand{\ee}{\end{eqnarray}}
\newcommand{\nn}{\nonumber}
\newcommand{\ben}{\begin{eqnarray*}}
\newcommand{\een}{\end{eqnarray*}}
\newcommand{\Om}{\Omega}
\newcommand{\lam}{\lambda}
\newcommand{\pa}{\partial}
\newcommand{\na}{\nabla}
\def\x{\times}
\def\hK{\hat{K}}
\def\na{\nabla}
\def\cC{\mathcal{C}}
\def\cT{\mathcal{T}}
\def\R{\mathbb{R}}
\def\bpsi{\boldsymbol{\psi}}
\def\div{\operatorname{div}}
\def\rot{\operatorname{rot}}
\DeclareMathOperator{\sspan}{span}
\DeclareMathOperator{\dif}{d}
\newcommand\diff{\,\dif}
\title[]
{\small  Lower Bounds for Eigenvalues of  Elliptic Operators\\---
By Nonconforming Finite Element Methods}
\author[J.~Hu,  Y.~Huang, and  Q.~Lin]
{Jun Hu$^\ast$, Yunqing Huang$^\dagger$, and Qun Lin$^\ddag$}
\address{$^\ast$ LMAM and School of Mathematical Sciences,
 Peking University, Beijing 100871, P. R. China}
\email{hujun@math.pku.edu.cn}
\address{$^\dagger$ Hunan Key Laboratory for Computation and
Simulation in Science and Engineering, Xiangtan
 University, Xiangtan 411105, P.R.China}
\email{huangyq@xtu.edu.cn}
\address{$^\ddag$ LSEC, ICMSEC, Academy of Mathematics and Systems
Science, Chinese Academy of Sciences, Beijing 100190, China.}
\email{linq@lsec.cc.ac.cn}
\thanks{The first author was supported by  the NSFC Project 11271035; the second
author was supported by NSFC the Key Project 11031006, IRT1179 of PCSIRT and 2010DFR00700.}
\keywords{Lower bound, nonconforming element, eigenvalue,  elliptic
operator.
\\ AMS Subject Classification: 65N30,  65N15, 35J25}
\begin{document}

\newpage
\begin{abstract} The aim of the paper is to introduce a new systematic method
that can produce lower bounds for eigenvalues.
The main idea is to use nonconforming finite element methods.  The general
conclusion herein is that if  local approximation properties of
nonconforming finite element spaces $V_h$ are better than global
continuity properties of $V_h$,  corresponding methods  will produce  lower bounds for  eigenvalues.
More precisely, under three conditions  on  continuity and
approximation properties of nonconforming finite element spaces
we first show abstract error estimates of approximate
eigenvalues and eigenfunctions. Subsequently, we propose one more condition
and prove that it is sufficient to guarantee  nonconforming
finite element methods to produce lower bounds for eigenvalues
of symmetric elliptic operators. As one application,  we
 show that this condition hold for most  nonconforming elements in literature. As another
important application, this condition provides a guidance to  modify
known nonconforming elements in  literature and to propose
new nonconforming elements.  In fact,  we enrich locally the
Crouzeix-Raviart element such that the new element satisfies the
condition;  we  propose a new nonconforming element for  second
order elliptic operators and prove that it will yield lower
bounds for eigenvalues. Finally, we prove the saturation condition
for most  nonconforming elements.
\end{abstract}
\maketitle

\section{Introduction}\label{S1}
Finding eigenvalues of partial differential operators is important
in the mathematical science. Since exact eigenvalues are almost
impossible, many papers and books investigate their bounds from
above and below. It is well known that the variational principle
(including  conforming finite element methods) provides upper
bounds. But the problem of obtaining lower bounds is generally considerably more difficult.
Moreover, a simple combination of lower and upper bounds will
produce intervals to which  exact eigenvalue belongs. This in
turn gives  reliable a posteriori error estimates of  approximate
eigenvalues, which is essential for the design of the coefficient of
safety in practical engineering. Therefore, it is a fundamental problem to achieve lower bounds for eigenvalues of  elliptic operators.
 In fact, the study of lower bounds for  eigenvalues can date back to remarkable works of
\cite{Forsythe54,Forsythe55} and \cite{Weinberger56,Weinberger58},
where lower bounds of  eigenvalues are derived by  finite
difference methods for second order elliptic eigenvalue problems.
 Since that  finite difference methods in some sense coincide with  standard  linear
finite element methods with mass lumping, one could expect that
finite element methods with mass lumping give lower bounds for
eigenvalues of operators, we refer interested readers to \cite{AD03,HH04} for
this aspect.

Nonconforming finite element methods are alternative possible
ways to produce lower bounds for eigenvalues of  operators. In deed,
the lower bound property of eigenvalues by nonconforming elements are observed in numerics, see,
 Zienkiewicz et al.\cite{Zienkiewicz67}, for the nonconforming Morley element,
Rannacher \cite{Rannacher79}, for the nonconforming Morley and Adini
elements, Liu and Yan \cite{LiuYan05}, for the nonconforming Wilson
\cite{ShiWang10,Wilson73}, $EQ^{\rot}_1$ \cite{Ltz04}, and
$Q^{\rot}_1$ \cite{Rannacher92} elements. See,  Boffi \cite{Boffi2010}, for further remarks
on  possible properties of discrete eigenvalues produced by  nonconforming methods.

However, there are a few results to study  the lower bound property of
eigenvalues by nonconforming elements.  The first result in this
direction is analyzed in a remarkable  paper  by Armentano and Duran \cite{AD04} for the
Laplacian operator. The analysis  is based on an identity for  errors of  eigenvalues.
 It is proved that the nonconforming linear  element of  \cite{CroRav73} leads to
lower bounds for  eigenvalues provided that eigenfunctions
$u\in H^{1+r}(\Omega)\cap H_0^1(\Om)$ with $0< r<1$.  The idea is  generalized to
the enriched nonconforming rotated $Q_1$ element of \cite{Ltz04}  in  Li \cite{Li08}, and to the Wilson element in
Zhang et al. \cite{ZhangYangChen07}.  See \cite{YangZhangLin10}  for a survey of earlier works.  The extension to the
Morley element can be found in  \cite{YangLinBiLi}.  However, all of those papers
 are based  on the saturation condition of approximations by piecewise polynomials
 for which a rigorous  proof is missed in literature.  We refer interested readers to
 \cite{LinHuangLi08, LinHuangLi09, LinLin06,Yang2000,ZhangYangChen07, YangZhangLin10}
 for  expansion methods based on superconvergence or extrapolation, which analyzes the lower bound property
  of eigenvalues by nonconforming  elements  on  uniform rectangular meshes.

The aim of our paper is to introduce a new systematic method
that can produce lower bounds for eigenvalues.
The main idea is to use  nonconforming finite element methods. However,  some numerics from the literature demonstrate
that some nonconforming elements  produce upper bounds of eigenvalues though some other nonconforming elements
yield lower bounds, see\cite{LiuYan05,Rannacher79}.
We find that the general condition lies in that local approximation properties of
nonconforming finite element spaces $V_h$ should be better than  global
continuity properties of $V_h$. Then  corresponding nonconforming methods will produce
 lower bounds for eigenvalues  of elliptic operators.  More precisely,  first,  we shall  analyze errors of
 discrete eigenvalues and
eigenfunctions. Second, we shall
propose a condition on  nonconforming element methods and then under the saturation condition
prove  that it is sufficient for  lower bounds for eigenvalues.
 With this result, to obtain lower bound for
 eigenvalue is to design nonconforming  element spaces with
enough local degrees of freedom when compared to  global
continuity. This in fact results in a  systematic method for the
lower bounds of eigenvalues.  As one application of our method,  we check that this condition holds for
 most used nonconforming elements, e.g., the Wilson element
\cite{ShiWang10,Wilson73}, the nonconforming linear element by
Crouzeix and Raviart \cite{CroRav73}, the nonconforming rotated
$Q_1$ element by Rannacher and Turek \cite{Rannacher92,ShiWang10},
and the enriched nonconforming rotated $Q_1$ element by Lin, Tobiska
and Zhou \cite{Ltz04} for  second order elliptic operators, the
Morley element \cite{Morley68,ShiWang10} and the Adini element \cite{Lascaux85,ShiWang10}
for  fourth order elliptic operators, and the Morley-Wang-Xu element \cite{WX06} for  $2m$-th order elliptic
operators.   As another important application,  we follow this guidance
to enrich locally the Crouzeix-Raviart element such that  the new element satisfies  the sufficient condition and
  to propose a new  nonconforming element
method for  second order elliptic operators and show that it
actually  produces  lower bounds for eigenvalues.
As an indispensable and important part of the paper, we prove the saturation condition  for most of these nonconforming
 elements.

 The paper is organized as follows.  In the following section, we
 shall present  symmetric elliptic eigenvalue problems and  their nonconforming  element
 methods in an abstract setting. In Section 3, based on three conditions on  discrete spaces,
 we analyze  error estimates for both  discrete eigenvalues and eigenfunctions.
 In Section 4, under one more condition, we prove an abstract result
 that eigenvalues produced by nonconforming methods are
 smaller than exact ones.   In Sections 5-6, we check these
 conditions for various nonconforming methods in  literature and we also propose two new
   nonconforming methods that admit  lower bounds for
   eigenvalues in Section 7. We end this paper by Section 8 where we give some comments,
    which is followed by  appendixes where we analyze the saturation condition for piecewise polynomial approximations.

\section{Eigenvalue problems and nonconforming finite element methods}

Let $V\subset H^m(\Omega)$ denote some standard Sobolev space on
some bounded Lipschitz domain $\Omega$ in $\R^n$ with a piecewise
flat boundary $\pa\Om$.  $2m$-th order elliptic  eigenvalue
problems read: Find $(\lam, u)\in\R\x V$ such that
\begin{equation}\label{eigen}
\begin{split}
a(u, v)&=\lam (\rho u,v)_{L^2(\Om)} \text{ for any } v\in V \text{
 and  }  \|\rho^{1/2}u\|_{L^2(\Om)}=1,
\end{split}
\end{equation}
with  some  positive function $\rho\in L^{\infty}(\Om)$. The
bilinear form $a(u,v)$ is symmetric, bounded,  and coercive in the
following sense:
 \begin{equation}\label{eq1.2}
 a(u,v)=a(v,u),   |a(u,v)|\lesssim
 \|u\|_V\|v\|_V,  \text{ and } \|v\|_{V}^2\lesssim a(v,v) \text{ for any }u, v\in V,
 \end{equation}
 with the norm $\|\cdot\|_V$ over the space $V$.  Throughout the paper,
an inequality $A \lesssim B$ replaces $A\leq C\, B$ with some
multiplicative mesh-size independent constant $C>0$ that depends
only on the domain $\Omega$, the shape (e.g., through the aspect
ratio) of elements, and possibly some norm of eigenfunctions $u$.
Finally, $A \approx B$ abbreviates $A \lesssim B \lesssim A$.

Under  the conditions \eqref{eq1.2},  we have that the eigenvalue
problem \eqref{eigen}  has a sequence of eigenvalues
\begin{equation*}
0<\lam_1\leq \lam_2\leq \lam_3\leq\cdots\nearrow +\infty,
\end{equation*}
and   corresponding eigenfunctions
\begin{equation}
u_1, u_2, u_3, \cdots,
\end{equation}
which can be chosen to satisfy
\begin{equation}
(\rho u_i, u_j)_{L^2(\Om)}=\delta_{ij}, i, j=1, 2, \cdots.
\end{equation}
We define
\begin{equation}
E_{\ell}=\sspan\{u_1,u_2, \cdots, u_{\ell}\}.
\end{equation}
Then,   eigenvalues and eigenfunctions satisfy the following
well-known minimum-maximum principle:
\begin{equation}\label{minmax}
\lam_k=\min\limits_{\dim V_k=k, V_k\subset V}\max\limits_{v\in
V_k}\frac{a(v,v)}{(\rho v,v)_{L^2(\Om)}}=\max\limits_{u\in
E_{k}}\frac{a(u,u)}{(\rho u,u)_{L^2(\Om)}}.
\end{equation}
For any eigenvalue $\lam$ of \eqref{eigen}, we define
\begin{equation}
M(\lam):=\{u: u \text{ is an eigenfunction of \eqref{eigen} to }\lam
\}.
\end{equation}
We shall be interested in approximating the eigenvalue problem
\eqref{eigen} by finite element methods.  To this  end, we suppose
we are given a discrete space $V_h$ defined over  a regular
triangulation $\cT_h$ of $\overline{\Omega}$ into (closed) simplexes
or $n$-rectangles \cite{BrennerScott}.

We need the piecewise  counterparts of differential operators with respect to $\cT_h$.
 For any differential operator $\mathcal{L}$, we define its piecewise
 counterpart $\mathcal{L}_h$ in the following way: we suppose that $v_K$ is defined over $K\in\cT_h$ and
 that the differential action $\mathcal{L}v_K$ is well-defined on $K$ which is  denoted  by
 $\mathcal{L}_Kv_K$ for any $K\in\cT_h$; then we  define $v_h$ by $v_h|_K=v_K$ where $v_h|_K$ denotes its restriction of $v_h$ over $K$;
 finally we define $\mathcal{L}_hv_h$ by $(\mathcal{L}_hv_h)|_K=\mathcal{L}_Kv_K$.

 We consider the discrete
eigenvalue problem: Find $(\lam_h, u_h)\in \R\x V_h$ such that
\begin{equation}\label{discreteeigentotal}
\begin{split}
a_h(u_h, v_h)&=\lam_h (\rho u_h,v_h)_{L^2(\Om)} \text{ for any }
v_h\in V_h \text{ and }  \|\rho^{1/2}u_h\|_{L^2(\Om)}=1.
\end{split}
\end{equation}
Here and throughout of this paper, $a_h(\cdot, \cdot)$ is the
piecewise counterpart of the bilinear form $a(\cdot, \cdot)$
 where  differential operators are replaced by their discrete counterparts.
Conditions on the approximation and  continuity properties of
discrete spaces  $V_h$ are assumed  as follows, respectively.\\

(H1)  $\|\cdot\|_h:=a_h(\cdot,\cdot)^{1/2}$ is a norm over
discrete  spaces $V_h$.\\

(H2) Suppose $v\in V\cap H^{m+\mathcal{S}}(\Om)$ with $0<\mathcal{S}\leq 1$.  Then,
\begin{equation*}
\inf\limits_{v_h\in V_h}\|v-v_h\|_h\lesssim
h^{\mathcal{S}}|v|_{H^{m+\mathcal{S}}(\Om)}.
\end{equation*}

 (H3) Suppose $v\in V\cap H^{m+s}(\Om)$ with  $0<s\leq \mathcal{S}\leq 1$.  Then,
\begin{equation*}
  \sup\limits_{0\not=v_h\in V_h}\frac{a_h(v,v_h)-(Av, v_h)_{L^2(\Om)}}{\|v_h\|_h}
  \lesssim h^{s} |v|_{H^{m+s}(\Om)}\,.
\end{equation*}

(H4)~~~ Let  $u$ and $u_h$ be  eigenfunctions of problems
\eqref{eigen} and \eqref{discreteeigentotal}, respectively.
 Assume that there exists an interpolation $\Pi_hu\in V_h$ with the
following properties:
\begin{equation}\label{estimateH2}
\begin{split}
& a_h(u-\Pi_hu, u_h)=0, \\[0.5ex]
&\|\rho^{1/2}u\|_{L^2(\Om)}^2-\|\rho^{1/2}\Pi_hu\|_{L^2(\Om)}^2\lesssim
h^{2s+\triangle s},\\[0.5ex]
&\|\rho^{1/2}(\Pi_hu-u)\|_{L^2(\Om)}\lesssim
h^{\mathcal{S}+\triangle \mathcal{S}}\,,
 \end{split}
\end{equation}
when the meshsize $h$ is small enough and $u\in V\cap
H^{m+\mathcal{S}}(\Om)$ with  two positive constants $\triangle s$ and $\triangle\mathcal{S}$.

Let $N=\dim  V_h$.  Under the condition (H1), the discrete problem
\eqref{discreteeigentotal} admits a sequence of discrete eigenvalues
$$
0<\lam_{1,h}\leq\lam_{2,h}\leq\cdots\leq\lam_{N,h},
$$
and corresponding eigenfunctions
$$
u_{1,h}, u_{2,h}, \cdots, u_{N,h}\,.
$$
In the case where $V_h$ is a conforming approximation in the sense
$V_h\subset V$,  it immediately  follows from the minimum-maximum
principle \eqref{minmax} that
$$
\lam_{k}\leq \lam_{k,h}, k=1,2,\cdots, N,
$$
which indicates that  $\lam_{k,h}$ is an approximation above 
$\lam_k$.

We define the discrete counterpart of $E_{\ell}$ by
\begin{equation}
E_{\ell, h}=\sspan\{u_{1,h},u_{2,h}, \cdots, u_{\ell, h}\}.
\end{equation}
Then,  we have the following discrete  minimum-maximum principle:
\begin{equation}\label{Discreteminmax}
\lam_{k,h}=\min\limits_{\dim V_{k,h}=k, V_{k,h}\subset
V_h}\max\limits_{v\in V_{k,h}}\frac{a_h(v,v)}{(\rho
v,v)_{L^2(\Om)}}=\max\limits_{u\in E_{k,h}}\frac{a_h(u,u)}{(\rho
u,u)_{L^2(\Om)}}.
\end{equation}

\section{Error estimates of eigenvalues and eigenfunctions}

In this section, we shall analyze errors of discrete eigenvalues
and eigenfunctions by nonconforming methods. We refer to
\cite{BA91,Rannacher79} for some  alternative analysis in the
functional analysis setting.  We would like to  stress the analysis
is a nontrivial extension to  nonconforming methods of the
analysis for conforming methods in \cite{StrangeFix73}. For
simplicity of presentation, we only consider the case where
$\lambda_{\ell}$ is an  eigenvalue of multiplicity $1$ and also note that the extension
to the  multiplicity $\geq 2$  case follows by using notations and concepts, for instance, from \cite[Page 406]{CarstensenGedicke11}.

Associated with the bilinear form $a(\cdot, \cdot)$, we define the
operator $A$ by
\begin{equation}\label{DifferentialOperator}
a(u,v)=(Au, v)_{L^2(\Om)} \text{ for any }v\in V\,.
\end{equation}

Given any $f\in L^2(\Om)$, let $u_f$  be the solution to the dual
problem: Find $u_f\in V$ such that
\begin{equation}\label{Regularity}
a(u_f,v)=(\rho f,v)_{L^2(\Om)} \text{ for any }v\in V\,.
\end{equation}
Generally speaking, the regularity of $u_f$ depends on, among others,  regularities of $f$ and $\rho$,
 elliptic operators under consideration, the shape of the domain $\Omega$ and  the boundary condition imposed.
   To fix the main idea and therefore avoid too technical notation,
  throughout this paper, without loss of generality, assume that $u_f\in V\cap H^{m+s}(\Om)$ with $0<s\leq 1$ in
the sense that
\begin{equation}
\|u_f\|_{H^{m+s}(\Om)}\lesssim \|\rho^{1/2}f\|_{L^2(\Om)}\,.
\end{equation}

In order to analyze $L^2$ error estimates of eigenfunctions,  define  quasi-Ritz-projections
$P_h^{\prime}u_{\ell}\in V_h$ by
\begin{equation}\label{Firstprojection}
a_h(P_h^{\prime}u_{\ell}, v_h)=\lam_{\ell}(\rho u_{\ell},
v_h)_{L^2(\Om)}\text{ for any }v_h\in V_h.
\end{equation}

The analysis also needs   Galerkin projection operators
$P_h: V\rightarrow V_h$ by
\begin{equation}\label{Galerkin}
a_h(P_hv,w_h)=a_h(v,w_h)\text{ for any }w_h\in V_h, v\in V.
\end{equation}
\begin{remark}
We note that $P_h^{\prime}$ is identical to $P_h$ for  conforming
methods, which indicates  the difference between  conforming
elements analyzed in \cite{StrangeFix73} and nonconforming
elements under consideration.
\end{remark}
Under the conditions (H1), (H2),  and (H3),  a standard argument for
nonconforming finite element methods, see, for instance,
\cite{BrennerScott}, proves
\begin{equation}
\|\rho^{1/2}(v-P_hv)\|_{L^2(\Om)}+h^{s}\|v-P_hv\|_h\lesssim
h^{2s}|v|_{H^{m+s}(\Om)}\,,
\end{equation}
provided that $v\in V\cap H^{m+s}(\Om)$ with $0<s\leq 1$.

Throughout this paper, $u_{\ell}$, $u_{j}$, and $u_{i}$  are eigenfunctions of the problem \eqref{eigen},
while $u_{\ell, h}$, $u_{j, h}$, and $u_{i,h}$ are discrete eigenfunctions of the discrete eigenvalue problem.
Note that $P_h^{\prime}u_{\ell}$ is the finite element approximation
of $u_{\ell}$.  Under conditions (H1)-(H3), a standard argument for
nonconforming finite element methods, see, for instance,
\cite{BrennerScott}, proves
\begin{lemma}  Suppose that the conditions  (H1)-(H3) hold. Then,
\begin{equation}
\|\rho^{1/2}(u_{\ell}-P_h^{\prime}u_{\ell})\|_{L^2(\Om)}+h^{s}\|u_{\ell}-P_h^{\prime}u_{\ell}\|_h\lesssim
h^{2s}|u_{\ell}|_{H^{m+s}(\Om)}\,,
\end{equation}
provided that $u_\ell\in V\cap H^{m+s}(\Om)$ with $0<s\leq 1$.
\end{lemma}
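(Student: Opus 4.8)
The plan is to reduce everything to the Galerkin‑projection estimate recorded just before the lemma, via an Aubin--Nitsche‑type duality. Write $e:=u_{\ell}-P_h^{\prime}u_{\ell}$, and for $\psi\in V$ with $A\psi\in L^2(\Om)$ and $z$ piecewise $H^m$ put $r_h(\psi,z):=a_h(\psi,z)-(A\psi,z)_{L^2(\Om)}$. Since $a_h$ agrees with $a$ on $V$ and $a(\psi,v)=(A\psi,v)_{L^2(\Om)}$ by \eqref{DifferentialOperator}, the functional $r_h(\psi,\cdot)$ vanishes on $V$; and the argument behind (H3) (stated there for $z\in V_h$, but element‑by‑element, hence valid for any piecewise $H^m$ function $z$) gives $|r_h(\psi,z)|\lesssim h^{s}|\psi|_{H^{m+s}(\Om)}\|z\|_h$ whenever $\psi\in V\cap H^{m+s}(\Om)$. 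I will use this repeatedly.

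First I would bound $\|e\|_h$. Splitting $e=(u_{\ell}-P_hu_{\ell})+(P_hu_{\ell}-P_h^{\prime}u_{\ell})$, the first term is $\lesssim h^{s}|u_{\ell}|_{H^{m+s}(\Om)}$ by the Galerkin‑projection estimate stated above. For the second, $Au_{\ell}=\lam_{\ell}\rho u_{\ell}$ and the definitions \eqref{Galerkin}, \eqref{Firstprojection} give, for all $v_h\in V_h$,
\begin{equation*}
a_h(P_hu_{\ell}-P_h^{\prime}u_{\ell},v_h)=a_h(u_{\ell},v_h)-\lam_{\ell}(\rho u_{\ell},v_h)_{L^2(\Om)}=r_h(u_{\ell},v_h)\,,
\end{equation*}
so (H1) and (H3) give $\|P_hu_{\ell}-P_h^{\prime}u_{\ell}\|_h\lesssim h^{s}|u_{\ell}|_{H^{m+s}(\Om)}$; the triangle inequality then yields $\|e\|_h\lesssim h^{s}|u_{\ell}|_{H^{m+s}(\Om)}$, which already settles the $h^{s}\|u_{\ell}-P_h^{\prime}u_{\ell}\|_h$ part of the estimate.

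Next I would carry out the duality. Let $w\in V$ solve $a(w,v)=(\rho e,v)_{L^2(\Om)}$ for all $v\in V$, so by the standing regularity assumption $w\in V\cap H^{m+s}(\Om)$, $\|w\|_{H^{m+s}(\Om)}\lesssim\|\rho^{1/2}e\|_{L^2(\Om)}$, and $Aw=\rho e$. Testing this dual problem \eqref{Regularity} with $u_{\ell}$ and the eigenvalue equation with $w$, and then using $Aw=\rho e$, the Galerkin orthogonality $a_h(w-P_hw,v_h)=0$ for $v_h\in V_h$, the symmetry of $a_h$, and the definition \eqref{Firstprojection} of $P_h^{\prime}$ with test function $P_hw$, I expect to arrive at
\begin{equation*}
\|\rho^{1/2}e\|_{L^2(\Om)}^{2}=(\rho e,e)_{L^2(\Om)}=\lam_{\ell}(\rho u_{\ell},w-P_hw)_{L^2(\Om)}+r_h(w,P_h^{\prime}u_{\ell})\,.
\end{equation*}
The first term is $\lesssim\|\rho^{1/2}(w-P_hw)\|_{L^2(\Om)}\lesssim h^{2s}|w|_{H^{m+s}(\Om)}\lesssim h^{2s}\|\rho^{1/2}e\|_{L^2(\Om)}$, by Cauchy--Schwarz in the $\rho$‑weighted product, the normalization $\|\rho^{1/2}u_{\ell}\|_{L^2(\Om)}=1$, and the Galerkin‑projection $L^2$ estimate applied to $w$. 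For the second, since $r_h(w,\cdot)$ vanishes on $V$ we have $r_h(w,P_h^{\prime}u_{\ell})=r_h(w,P_h^{\prime}u_{\ell}-u_{\ell})=-r_h(w,e)$, and the consistency bound of the first paragraph applied to $z=e$ together with the energy estimate gives $|r_h(w,e)|\lesssim h^{s}|w|_{H^{m+s}(\Om)}\|e\|_h\lesssim h^{2s}|u_{\ell}|_{H^{m+s}(\Om)}\|\rho^{1/2}e\|_{L^2(\Om)}$. Dividing by $\|\rho^{1/2}e\|_{L^2(\Om)}$ gives $\|\rho^{1/2}e\|_{L^2(\Om)}\lesssim h^{2s}|u_{\ell}|_{H^{m+s}(\Om)}$, and combining with the energy estimate completes the proof.

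The step I expect to be the real obstacle is the consistency term $r_h(w,P_h^{\prime}u_{\ell})=-r_h(w,e)$: applied naively to $P_h^{\prime}u_{\ell}\in V_h$, (H3) gives only $O(h^{s})\|P_h^{\prime}u_{\ell}\|_h=O(h^{s})$, which is a full power of $h^{s}$ too weak. The missing power must be extracted from the fact that $r_h(w,\cdot)$ annihilates the conforming part of $e$, so that only the genuinely nonconforming part — of size $\|e\|_h=O(h^{s}|u_{\ell}|_{H^{m+s}(\Om)})$ — survives; making this precise (the nonconforming analogue of superapproximation for the consistency functional, i.e.\ the validity of the (H3) bound for the broken function $e$) is where the work lies. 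Everything else is routine bookkeeping with \eqref{DifferentialOperator}, \eqref{Galerkin}, \eqref{Firstprojection}, and the dual problem \eqref{Regularity}.
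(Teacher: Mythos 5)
Your overall route is the same one the paper has in mind: it simply observes that $P_h^{\prime}u_{\ell}$ is the nonconforming Galerkin approximation of the source problem with data $\lam_{\ell}\rho u_{\ell}$ and cites the standard energy-plus-duality argument (Brenner--Scott), which is exactly what you reconstruct. Your energy-norm part is correct and complete: the splitting $e=(u_{\ell}-P_hu_{\ell})+(P_hu_{\ell}-P_h^{\prime}u_{\ell})$, the identity $a_h(P_hu_{\ell}-P_h^{\prime}u_{\ell},v_h)=r_h(u_{\ell},v_h)$ (using $Au_{\ell}=\lam_{\ell}\rho u_{\ell}$), and (H1)--(H3) give $\|e\|_h\lesssim h^{s}|u_{\ell}|_{H^{m+s}(\Om)}$. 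The duality identity
$\|\rho^{1/2}e\|_{L^2(\Om)}^2=\lam_{\ell}(\rho u_{\ell},w-P_hw)_{L^2(\Om)}+r_h(w,P_h^{\prime}u_{\ell})$
is also correct, and the first term is handled properly via the $L^2$ estimate for $P_h$ stated in the paper.

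The genuine gap is the one you yourself flag at the end, and it is not cosmetic. Your parenthetical claim in the first paragraph --- that the (H3) bound holds ``element-by-element, hence for any piecewise $H^m$ function $z$'' --- is false as a general statement: the consistency estimate is not a local fact but relies on the global weak continuity of $V_h$ (vanishing face moments of jumps); for an arbitrary broken $H^m$ function $r_h(w,z)$ is generically of size $\|z\|_h$ with no gain in $h$. Consequently the bound you actually need, $|r_h(w,e)|\lesssim h^{s}|w|_{H^{m+s}(\Om)}\|e\|_h$ with $e=u_{\ell}-P_h^{\prime}u_{\ell}\in V+V_h$, does not follow from (H1)--(H3) as stated: applying (H3) to $P_h^{\prime}u_{\ell}$ gives only $O(h^{s})$, and the identity $r_h(w,e)=-r_h(w,P_h^{\prime}u_{\ell})$ does not by itself transfer the bound from $\|P_h^{\prime}u_{\ell}\|_h$ to $\|e\|_h$. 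What saves the lemma for the concrete elements in the paper is that the jump-based proofs of (H3) extend verbatim to $z\in V+V_h$, because $[e]=-[P_h^{\prime}u_{\ell}]$ inherits the face moment conditions while the trace/scaling estimates only see $\|D_h^m z\|_{L^2}$; but that is an element-specific verification (equivalently, an additional hypothesis that the consistency functional is bounded on $V+V_h$ by $h^{s}|w|_{H^{m+s}(\Om)}\|\cdot\|_h$), not a consequence of the abstract conditions. Since the paper itself disposes of the lemma by citing the ``standard argument,'' your proposal matches its intent, but as written the $L^2$ half remains incomplete at precisely this consistency step.
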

From  $P_h^{\prime}u_{\ell}\in V_h$ we have
\begin{equation}
P_h^{\prime}u_{\ell}=\sum\limits_{j=1}^N(\rho P_h^{\prime}u_{\ell},
u_{j,h})u_{j,h}\,.
\end{equation}
For the projection operator $P_h^{\prime}$, we have the following
important property
\begin{equation}
\begin{split}
&(\lam_{j,h}-\lam_{\ell})(\rho P_h^{\prime}u_{\ell},
u_{j,h})_{L^2(\Om)}=\lam_{\ell}(\rho(u_{\ell}-P_h^{\prime}u_{\ell}),
u_{j,h})_{L^2(\Om)}\,.
\end{split}
\end{equation}
In fact, we have
\begin{equation}
\lam_{j,h}(\rho P_h^{\prime}u_{\ell},
u_{j,h})_{L^2(\Om)}=a_h(u_{j,h},P_h^{\prime}u_{\ell})=\lam_{\ell}(\rho
u_{\ell}, u_{j,h})_{L^2(\Om)}\,.
\end{equation}
Suppose that $\lam_{\ell}\not=\lam_j$ if $\ell\not=j$. Then there
exists a separation  constant $d_{\ell}$ with
\begin{equation}
\frac{\lam_{\ell}}{|\lam_{j,h}-\lam_{\ell}|}\leq d_{\ell}\text{ for
any }j\not=\ell,
\end{equation}
provided that the meshsize $h$ is small enough.

\begin{theorem}\label{L2} Let $u_{\ell}$ and $u_{\ell,h}$ be
eigenfunctions of \eqref{eigen} and \eqref{discreteeigentotal},
respectively. Suppose that  the conditions (H1)-(H3) hold.  Then,
\begin{equation}\label{eigenfunctionerrorab}
\begin{split}
\|\rho^{1/2}(u_{\ell}-u_{\ell,h})\|_{L^2(\Om)}
 \lesssim h^{2s}|u_{\ell}|_{H^{m+s}(\Om)}\,,
\end{split}
\end{equation}
provided that $u_\ell\in V\cap H^{m+s}(\Om)$ with $0<s\leq 1$.
\end{theorem}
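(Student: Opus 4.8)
The plan is to estimate the $L^2$-error by comparing $u_{\ell,h}$ first with the quasi-Ritz-projection $P_h'u_\ell$ and then invoking the Lemma that already controls $\|\rho^{1/2}(u_\ell - P_h'u_\ell)\|_{L^2(\Om)}$ by $h^{2s}|u_\ell|_{H^{m+s}(\Om)}$. By the triangle inequality it suffices to show
\begin{equation*}
\|\rho^{1/2}(P_h'u_\ell - u_{\ell,h})\|_{L^2(\Om)} \lesssim h^{2s}|u_\ell|_{H^{m+s}(\Om)}.
\end{equation*}
Expand $P_h'u_\ell = \sum_{j=1}^N (\rho P_h'u_\ell, u_{j,h})_{L^2(\Om)} u_{j,h}$ in the discrete eigenbasis. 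The term $j=\ell$ should be the dominant one and close to $u_{\ell,h}$ (after fixing signs/normalization appropriately), so the real work is to bound $\sum_{j\neq\ell} (\rho P_h'u_\ell, u_{j,h})_{L^2(\Om)}^2$, using the orthonormality of the $u_{j,h}$ to pass between the expansion coefficients and the $L^2$-norm.

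For $j \neq \ell$, I would use the key identity displayed just before the theorem,
\begin{equation*}
(\lam_{j,h}-\lam_\ell)(\rho P_h'u_\ell, u_{j,h})_{L^2(\Om)} = \lam_\ell (\rho(u_\ell - P_h'u_\ell), u_{j,h})_{L^2(\Om)},
\end{equation*}
together with the separation constant $d_\ell$ satisfying $\lam_\ell/|\lam_{j,h}-\lam_\ell| \le d_\ell$ for all $j\neq\ell$ when $h$ is small. This gives $|(\rho P_h'u_\ell, u_{j,h})_{L^2(\Om)}| \le d_\ell\,|(\rho(u_\ell-P_h'u_\ell), u_{j,h})_{L^2(\Om)}|$, so that
\begin{equation*}
\sum_{j\neq\ell} (\rho P_h'u_\ell, u_{j,h})_{L^2(\Om)}^2 \le d_\ell^2 \sum_{j\neq\ell} (\rho(u_\ell-P_h'u_\ell), u_{j,h})_{L^2(\Om)}^2 \le d_\ell^2\,\|\rho^{1/2}(u_\ell - P_h'u_\ell)\|_{L^2(\Om)}^2,
\end{equation*}
where the last step is Bessel's inequality for the $\rho$-orthonormal system $\{u_{j,h}\}$. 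By the Lemma this is $\lesssim h^{4s}|u_\ell|_{H^{m+s}(\Om)}^2$, which is the required bound on the "off-diagonal" part.

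It then remains to handle the $j=\ell$ coefficient: one shows $(\rho P_h'u_\ell, u_{\ell,h})_{L^2(\Om)} \to 1$ as $h\to 0$ (using $\|\rho^{1/2}P_h'u_\ell\|_{L^2(\Om)}\to 1$ from the Lemma and the smallness of the other coefficients), so that $|(\rho P_h'u_\ell, u_{\ell,h})_{L^2(\Om)} - 1| \lesssim h^{4s}$ after squaring and using $\sum_j (\rho P_h'u_\ell, u_{j,h})^2 = \|\rho^{1/2}P_h'u_\ell\|_{L^2(\Om)}^2$; here we may choose the sign of $u_{\ell,h}$ so the inner product is nonnegative. Combining, $\|\rho^{1/2}(P_h'u_\ell - u_{\ell,h})\|_{L^2(\Om)}^2 = \big((\rho P_h'u_\ell,u_{\ell,h})-1\big)^2 + \sum_{j\neq\ell}(\rho P_h'u_\ell,u_{j,h})^2 + \big(\|\rho^{1/2}P_h'u_\ell\|^2 - (\rho P_h'u_\ell,u_{\ell,h})^2\big)$ type bookkeeping all yields $\lesssim h^{4s}|u_\ell|_{H^{m+s}(\Om)}^2$. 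I expect the main obstacle to be the careful accounting in this last diagonal step — in particular justifying that $\lam_{\ell,h}$ separates from the other discrete eigenvalues uniformly in $h$ (so $d_\ell$ is genuinely $h$-independent) and that the coefficient $(\rho P_h'u_\ell, u_{\ell,h})$ is bounded away from $0$, which is where the convergence $\lam_{\ell,h}\to\lam_\ell$ and the normalization $\|\rho^{1/2}u_{\ell,h}\|_{L^2(\Om)}=1$ must be used; the off-diagonal estimate itself is essentially a one-line consequence of Bessel's inequality and the separation bound.
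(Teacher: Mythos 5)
Your proposal follows essentially the same route as the paper: expand $P_h^{\prime}u_{\ell}$ in the discrete eigenbasis, control the off-diagonal coefficients via the identity $(\lam_{j,h}-\lam_{\ell})(\rho P_h^{\prime}u_{\ell},u_{j,h})_{L^2(\Om)}=\lam_{\ell}(\rho(u_{\ell}-P_h^{\prime}u_{\ell}),u_{j,h})_{L^2(\Om)}$, the separation constant $d_{\ell}$ and Bessel's inequality, then fix the diagonal coefficient $\beta_{\ell}$ using the normalizations and a sign choice (the paper does this by comparing $u_{\ell}$ with $\beta_{\ell}u_{\ell,h}$ and using $|\beta_{\ell}-1|\leq\|\rho^{1/2}(u_{\ell}-\beta_{\ell}u_{\ell,h})\|_{L^2(\Om)}$, while you bookkeep $\|\rho^{1/2}P_h^{\prime}u_{\ell}\|_{L^2(\Om)}^2=\sum_j\beta_j^2$ --- an equivalent variant). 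One small inaccuracy: your argument only yields $|\beta_{\ell}-1|\lesssim h^{2s}$, not $h^{4s}$, since $\|\rho^{1/2}P_h^{\prime}u_{\ell}\|_{L^2(\Om)}^2-1$ is itself only $O(h^{2s})$; but $h^{2s}$ is all that is needed for \eqref{eigenfunctionerrorab}, so the proof still closes.
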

\begin{proof}
We denote the key coefficient $(\rho P_h^{\prime}u_{\ell},
u_{\ell,h})_{L^2(\Om)}$ by $\beta_{\ell}$.  The rest can be bounded
as follows:
\begin{equation}
\begin{split}
\|\rho^{1/2}(P_h^{\prime}u_{\ell}-\beta_{\ell}u_{\ell,h})\|_{L^2(\Om)}^2&=\sum\limits_{j\not=\ell}(\rho
P_h^{\prime}u_{\ell}, u_{j,h})_{L^2(\Om)}^2\leq
d_{\ell}^2\sum\limits_{j\not=\ell}(\rho(u_{\ell}-P_h^{\prime}u_{\ell}),
u_{j,h})_{L^2(\Om)}^2\\[0.5ex]
&\leq
d_{\ell}^2\|\rho^{1/2}(u_{\ell}-P_h^{\prime}u_{\ell})\|_{L^2(\Om)}^2\,.
\end{split}
\end{equation}
This leads to
\begin{equation}
\begin{split}
&\|\rho^{1/2}(u_{\ell}-\beta_{\ell}u_{\ell,h})\|_{L^2(\Om)} \leq
\|\rho^{1/2}(u_{\ell}-P_h^{\prime}u_{\ell})\|_{L^2(\Om)}
+\|\rho^{1/2}(P_h^{\prime}u_{\ell}-\beta_{\ell}u_{\ell,h})\|_{L^2(\Om)}\\[0.5ex]
&\leq
(1+d_{\ell})\|\rho^{1/2}(u_{\ell}-P_h^{\prime}u_{\ell})\|_{L^2(\Om)}\lesssim
h^{2s}\lam_{\ell}^{(m+s)/2m}\,.
\end{split}
\end{equation}
\begin{equation}
\begin{split}
&\|\rho^{1/2}u_{\ell}\|_{L^2(\Om)}-\|\rho^{1/2}(u_{\ell}-\beta_{\ell}u_{\ell,h})\|_{L^2(\Om)}
\leq \|\rho^{1/2}\beta_{\ell}u_{\ell,h}\|_{L^2(\Om)}\\[0.5ex]
&\leq \|\rho^{1/2}u_{\ell}\|_{L^2(\Om)}+
\|\rho^{1/2}(u_{\ell}-\beta_{\ell}u_{\ell,h})\|_{L^2(\Om)}.
\end{split}
\end{equation}
Since both $u_{\ell}$ and $u_{\ell,h}$ are unit vectors, we can
choose them such that $\beta_{\ell}\geq 0$. Hence we have
$|\beta_{\ell}-1|\leq
\|\rho^{1/2}(u_{\ell}-\beta_{\ell}u_{\ell,h})\|_{L^2(\Om)}$. Thus,
we obtain
\begin{equation}\label{eigenfunctionerrorL2ab}
\begin{split}
\|\rho^{1/2}(u_{\ell}-u_{\ell,h})\|_{L^2(\Om)}
 &\leq \|\rho^{1/2}(u_{\ell}-\beta_{\ell}u_{\ell,h})\|_{L^2(\Om)}
 +|\beta_{\ell}-1|\|\rho^{1/2}u_{\ell,h}\|_{L^2(\Om)}\\[0.5ex]
 &\leq 2
 \|\rho^{1/2}(u_{\ell}-\beta_{\ell}u_{\ell,h})\|_{L^2(\Om)}\lesssim
h^{2s}|u_{\ell}|_{H^{m+s}(\Om)}\,.
 \end{split}
\end{equation}
This completes the proof.
\end{proof}

Next we analyze  errors of  eigenvalues.  To this end,  define
$\tilde{u}_{\ell,h}\in V$ by
 \begin{equation}
 a(\tilde{u}_{\ell,h}, v)=\lambda_{\ell,h}(\rho u_{\ell,h}, v)_{L^2(\Om)}\text{ for any }v\in V.
 \end{equation}
 It follows from \eqref{eigen} and \eqref{discreteeigentotal} that
\begin{equation}
\begin{split}
(\rho(\tilde{u}_{\ell,h}-u_{\ell,h}),
u_{\ell})_{L^2(\Omega)}&=\lambda_{\ell}^{-1}\lambda_{\ell,h}(\rho
u_{\ell,h},u_{\ell})_{L^2(\Omega)}-(\rho u_{\ell,h}, u_{\ell})_{L^2(\Omega)}\\[0.5ex]
&=\frac{(\lambda_{\ell,h}-\lambda_{\ell})(\rho u_{\ell,h},
u_{\ell})_{L^2(\Omega)}}{\lambda_{\ell}}.
\end{split}
\end{equation}
Thus we have
 \begin{equation}\label{Eigendifference}
 \lambda_{\ell,h}-\lambda_{\ell}=\frac{\lambda_{\ell}(\rho(\tilde{u}_{\ell,h}-u_{\ell,h}),
u_{\ell})_{L^2(\Omega)}}{(\rho u_{\ell,h}, u_{\ell})_{L^2(\Omega)}}.
 \end{equation}
Assume that $\tilde{u}_{\ell, h}\in
V\cap H^{m+s}(\Omega)$ with $0<s\leq 1$ in the sense that
\begin{equation}
\|\tilde{u}_{\ell,h}\|_{H^{m+s}(\Omega)}\lesssim
\lambda_{\ell,h}\|\rho^{1/2}u_{\ell,h}\|_{L^2(\Omega)}.
\end{equation}
Note that  $u_{\ell,h}$ is the finite element approximation of
$\tilde{u}_{\ell, h}$. A standard argument for  nonconforming
finite element methods, see, for instance,
\cite{BrennerScott}, proves
\begin{lemma} Suppose that the  conditions  (H1)-(H3) hold. Then,
\begin{equation}
\|\rho^{1/2}(u_{\ell, h}-\tilde{u}_{\ell,
h})\|_{L^2(\Om)}+h^{s}\|u_{\ell, h}-\tilde{u}_{\ell,
h}\|_h\lesssim
\lambda_{\ell,h}h^{2s}\|\rho^{1/2}u_{\ell,h}\|_{L^2(\Omega)}\,.
\end{equation}
\end{lemma}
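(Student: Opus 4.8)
The plan is to notice that $u_{\ell,h}$ is exactly the nonconforming finite element approximation of $\tilde u_{\ell,h}$, so that the asserted inequality is one more instance of the same ``standard argument'' already used for the pairs $(v,P_hv)$ and $(u_{\ell},P_h^{\prime}u_{\ell})$, to which one only appends the elliptic regularity bound for $\tilde u_{\ell,h}$. Indeed, by \eqref{DifferentialOperator} and the definition of $\tilde u_{\ell,h}$ we have $(A\tilde u_{\ell,h},v)_{L^2(\Om)}=a(\tilde u_{\ell,h},v)=\lam_{\ell,h}(\rho u_{\ell,h},v)_{L^2(\Om)}$ for all $v\in V$, so $A\tilde u_{\ell,h}=\lam_{\ell,h}\rho u_{\ell,h}\in L^2(\Om)$; combined with \eqref{discreteeigentotal} this gives $a_h(u_{\ell,h},v_h)=(A\tilde u_{\ell,h},v_h)_{L^2(\Om)}$ for every $v_h\in V_h$, i.e.\ $u_{\ell,h}$ is the discrete solution of the source problem whose exact solution is $\tilde u_{\ell,h}$, in exactly the role $P_h^{\prime}u_{\ell}$ plays for $u_{\ell}$.

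For the energy bound I would compare $u_{\ell,h}$ with the Galerkin projection $P_h\tilde u_{\ell,h}$. Setting $\chi_h:=u_{\ell,h}-P_h\tilde u_{\ell,h}\in V_h$, the identity above together with \eqref{Galerkin} gives $a_h(\chi_h,v_h)=(A\tilde u_{\ell,h},v_h)_{L^2(\Om)}-a_h(\tilde u_{\ell,h},v_h)$ for all $v_h\in V_h$, so (H3) yields $\|\chi_h\|_h\lesssim h^s|\tilde u_{\ell,h}|_{H^{m+s}(\Om)}$; with the already-established estimate $\|\rho^{1/2}(v-P_hv)\|_{L^2(\Om)}+h^s\|v-P_hv\|_h\lesssim h^{2s}|v|_{H^{m+s}(\Om)}$ for $v=\tilde u_{\ell,h}$ and the triangle inequality this gives $\|u_{\ell,h}-\tilde u_{\ell,h}\|_h\lesssim h^s|\tilde u_{\ell,h}|_{H^{m+s}(\Om)}$. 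For the $L^2$ bound I would estimate $\|\rho^{1/2}\chi_h\|_{L^2(\Om)}$ by an Aubin--Nitsche argument: let $\xi\in V$ solve $a(\xi,v)=(\rho\chi_h,v)_{L^2(\Om)}$, so $A\xi=\rho\chi_h$ and $\|\xi\|_{H^{m+s}(\Om)}\lesssim\|\rho^{1/2}\chi_h\|_{L^2(\Om)}$. Writing $\|\rho^{1/2}\chi_h\|_{L^2(\Om)}^2=(A\xi,\chi_h)_{L^2(\Om)}$, replacing $a_h(\xi,\chi_h)$ by $a_h(P_h\xi,\chi_h)$ via \eqref{Galerkin}, applying the relation for $\chi_h$ with test function $P_h\xi$, and using the orthogonality $a_h(P_h\tilde u_{\ell,h},\xi-P_h\xi)=0$, one reduces $\|\rho^{1/2}\chi_h\|_{L^2(\Om)}^2$ to a sum of three terms: $a_h(\xi,\chi_h)-(A\xi,\chi_h)_{L^2(\Om)}$, bounded by $h^s|\xi|_{H^{m+s}(\Om)}\|\chi_h\|_h$ from (H3); $a_h(\tilde u_{\ell,h}-P_h\tilde u_{\ell,h},\xi-P_h\xi)$, bounded by $h^{2s}|\tilde u_{\ell,h}|_{H^{m+s}(\Om)}|\xi|_{H^{m+s}(\Om)}$ from the preceding energy bounds; and $(A\tilde u_{\ell,h},\xi-P_h\xi)_{L^2(\Om)}=\lam_{\ell,h}(\rho^{1/2}u_{\ell,h},\rho^{1/2}(\xi-P_h\xi))_{L^2(\Om)}$, bounded by $\lam_{\ell,h}\|\rho^{1/2}u_{\ell,h}\|_{L^2(\Om)}\|\rho^{1/2}(\xi-P_h\xi)\|_{L^2(\Om)}\lesssim h^{2s}\lam_{\ell,h}\|\rho^{1/2}u_{\ell,h}\|_{L^2(\Om)}|\xi|_{H^{m+s}(\Om)}$. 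Dividing by $\|\rho^{1/2}\chi_h\|_{L^2(\Om)}$ and adding the projection estimate for $v=\tilde u_{\ell,h}$ then gives $\|\rho^{1/2}(u_{\ell,h}-\tilde u_{\ell,h})\|_{L^2(\Om)}\lesssim h^{2s}|\tilde u_{\ell,h}|_{H^{m+s}(\Om)}$.

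Finally I would insert the assumed regularity $|\tilde u_{\ell,h}|_{H^{m+s}(\Om)}\le\|\tilde u_{\ell,h}\|_{H^{m+s}(\Om)}\lesssim\lam_{\ell,h}\|\rho^{1/2}u_{\ell,h}\|_{L^2(\Om)}$ into both bounds, which turns $h^s\|u_{\ell,h}-\tilde u_{\ell,h}\|_h$ and $\|\rho^{1/2}(u_{\ell,h}-\tilde u_{\ell,h})\|_{L^2(\Om)}$ into $\lesssim\lam_{\ell,h}h^{2s}\|\rho^{1/2}u_{\ell,h}\|_{L^2(\Om)}$, which is the claim. The step I expect to be the main obstacle is the $L^2$ duality: as always for nonconforming elements, the consistency contribution that pairs the smooth primal solution $\tilde u_{\ell,h}$ against $P_h\xi$ has to be shown to be of order $h^{2s}$ rather than only $h^s$, and this forces one to use both the Galerkin orthogonality --- to rewrite that contribution as $a_h(\tilde u_{\ell,h}-P_h\tilde u_{\ell,h},\xi-P_h\xi)-(A\tilde u_{\ell,h},\xi-P_h\xi)_{L^2(\Om)}$ --- and the fact that $A\tilde u_{\ell,h}\in L^2(\Om)$, which lets the last term be controlled by the $O(h^{2s})$ $L^2$-projection error; everything else is the routine nonconforming machinery of \cite{BrennerScott}.
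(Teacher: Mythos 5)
Your argument is correct and is precisely the "standard argument for nonconforming finite element methods" that the paper invokes (citing Brenner--Scott) without writing it out: you identify $u_{\ell,h}$ as the discrete solution of the source problem with exact solution $\tilde u_{\ell,h}$, run the second-Strang-lemma energy estimate and the Aubin--Nitsche duality with the Galerkin projection $P_h$, and finish with the assumed regularity $\|\tilde u_{\ell,h}\|_{H^{m+s}(\Om)}\lesssim\lam_{\ell,h}\|\rho^{1/2}u_{\ell,h}\|_{L^2(\Om)}$. The decomposition into the dual consistency term, the product term $a_h(\tilde u_{\ell,h}-P_h\tilde u_{\ell,h},\xi-P_h\xi)$, and the term $(A\tilde u_{\ell,h},\xi-P_h\xi)_{L^2(\Om)}$ checks out, so this is a valid filling-in of the proof the paper leaves to the reference.
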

Inserting the above estimate into \eqref{Eigendifference} proves:

\begin{theorem}\label{eigvalueerrorab} Let $\lam_{\ell}$ and $\lam_{\ell,h}$ be
eigenvalues of \eqref{eigen} and \eqref{discreteeigentotal},
respectively. Suppose that (H1)-(H3) hold.  Then,
\begin{equation}
|\lam_{\ell,h}-\lam_{\ell}|\lesssim h^{2s}|u_{\ell}|_{H^{m+s}(\Om)}\,,
\end{equation}
provided that $h$ is small enough and that $u_\ell\in V\cap H^{m+s}(\Om)$ with $0<s\leq 1$..
\end{theorem}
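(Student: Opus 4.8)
The plan is to start from the exact error identity \eqref{Eigendifference},
\[
\lambda_{\ell,h}-\lambda_{\ell}
=\frac{\lambda_{\ell}\bigl(\rho(\tilde u_{\ell,h}-u_{\ell,h}),u_{\ell}\bigr)_{L^2(\Om)}}
{(\rho u_{\ell,h},u_{\ell})_{L^2(\Om)}}\,,
\]
and to bound the numerator and the denominator separately. For the denominator, Theorem~\ref{L2} gives $\|\rho^{1/2}(u_{\ell}-u_{\ell,h})\|_{L^2(\Om)}\lesssim h^{2s}|u_{\ell}|_{H^{m+s}(\Om)}$, so by Cauchy--Schwarz $(\rho u_{\ell,h},u_{\ell})_{L^2(\Om)}=\|\rho^{1/2}u_{\ell}\|_{L^2(\Om)}^2+O(h^{2s})=1+O(h^{2s})$, hence the denominator is bounded below by a constant (say $\tfrac12$) once $h$ is small enough; this is where the hypothesis ``$h$ small enough'' is used. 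So the whole problem reduces to estimating the numerator $\bigl(\rho(\tilde u_{\ell,h}-u_{\ell,h}),u_{\ell}\bigr)_{L^2(\Om)}$.

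For the numerator I would apply the Lemma immediately preceding the theorem, which states
\[
\|\rho^{1/2}(u_{\ell,h}-\tilde u_{\ell,h})\|_{L^2(\Om)}
\lesssim \lambda_{\ell,h}\,h^{2s}\,\|\rho^{1/2}u_{\ell,h}\|_{L^2(\Om)}
=\lambda_{\ell,h}\,h^{2s}\,,
\]
using $\|\rho^{1/2}u_{\ell,h}\|_{L^2(\Om)}=1$. Another Cauchy--Schwarz with $u_{\ell}$ (again a unit vector in the $\rho$-weighted norm) then gives
\[
\bigl|\bigl(\rho(\tilde u_{\ell,h}-u_{\ell,h}),u_{\ell}\bigr)_{L^2(\Om)}\bigr|
\le \|\rho^{1/2}(\tilde u_{\ell,h}-u_{\ell,h})\|_{L^2(\Om)}\,\|\rho^{1/2}u_{\ell}\|_{L^2(\Om)}
\lesssim \lambda_{\ell,h}\,h^{2s}\,.
\]
Combining with the denominator bound yields $|\lambda_{\ell,h}-\lambda_{\ell}|\lesssim \lambda_{\ell}\lambda_{\ell,h}h^{2s}$. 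Since Theorem~\ref{eigvalueerrorab} already assumes $h$ is small enough, $\lambda_{\ell,h}$ is bounded by a constant multiple of $\lambda_{\ell}$ (this follows from the same denominator argument together with the definition of $\tilde u_{\ell,h}$, or simply from the already-established convergence $\lambda_{\ell,h}\to\lambda_{\ell}$), so the factor $\lambda_{\ell,h}$ can be absorbed into the implied constant, which by the paper's convention is allowed to depend on norms of the eigenfunction $u_{\ell}$. Finally, to convert $\lambda_{\ell}$ into $|u_{\ell}|_{H^{m+s}(\Om)}$ on the right-hand side one uses the elliptic regularity normalization $\|u_{\ell}\|_{H^{m+s}(\Om)}\lesssim\lambda_{\ell}\|\rho^{1/2}u_{\ell}\|_{L^2(\Om)}=\lambda_{\ell}$ (equivalently, $\lambda_{\ell}\lesssim|u_{\ell}|_{H^{m+s}(\Om)}$ since the eigenfunction is normalized), giving the stated estimate $|\lambda_{\ell,h}-\lambda_{\ell}|\lesssim h^{2s}|u_{\ell}|_{H^{m+s}(\Om)}$.

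I do not expect any serious obstacle here: the theorem is essentially an immediate corollary of the identity \eqref{Eigendifference} and the two lemmas, and the only real point of care is the bookkeeping of the constants — making sure the denominator stays bounded away from zero (hence the ``$h$ small enough'' hypothesis) and that the spectral factors $\lambda_{\ell}$, $\lambda_{\ell,h}$ are legitimately swept into the $\lesssim$ constant under the paper's stated conventions on mesh-independent constants depending on eigenfunction norms. If one wanted to track the dependence on $\ell$ explicitly one would keep $\lambda_{\ell}^{2}h^{2s}$ (or $\lambda_{\ell}^{1+(m+s)/(2m)}h^{2s}$, consistent with the exponent appearing in the proof of Theorem~\ref{L2}) rather than hiding it, but for the qualitative statement as written the argument above is complete.
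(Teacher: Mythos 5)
Your proposal is correct and follows essentially the same route as the paper: the paper's proof consists precisely of inserting the estimate of the lemma preceding the theorem into the identity \eqref{Eigendifference}, with the Cauchy--Schwarz step, the lower bound on the denominator for small $h$, and the absorption of the spectral factors left implicit. Your write-up simply makes explicit the bookkeeping the paper omits.
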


Finally we can  have  error estimates in the energy norm of eigenfunctions.

\begin{theorem}\label{Energynormab} Let $u_{\ell}$ and $u_{\ell,h}$ be
eigenfunctions of \eqref{eigen} and \eqref{discreteeigentotal},
respectively. Suppose that the conditions (H1)-(H3) hold.  Then,
\begin{equation}
 \|u_{\ell}-u_{\ell,h}\|_h\lesssim
h^{s}|u_{\ell}|_{H^{m+s}(\Om)}\,,
\end{equation}
provided that $u_\ell\in V\cap H^{m+s}(\Om)$ with $0<s\leq 1$.
\end{theorem}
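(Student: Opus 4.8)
The plan is to run the same three‑term decomposition used in the proof of Theorem~\ref{L2}, but now measured in the energy norm $\|\cdot\|_h=a_h(\cdot,\cdot)^{1/2}$. Inserting the quasi‑Ritz projection $P_h^{\prime}u_{\ell}$ and the scaled discrete eigenfunction $\beta_{\ell}u_{\ell,h}$, where $\beta_{\ell}=(\rho P_h^{\prime}u_{\ell},u_{\ell,h})_{L^2(\Om)}$, the triangle inequality gives
\begin{equation*}
\|u_{\ell}-u_{\ell,h}\|_h\leq\|u_{\ell}-P_h^{\prime}u_{\ell}\|_h+\|P_h^{\prime}u_{\ell}-\beta_{\ell}u_{\ell,h}\|_h+|\beta_{\ell}-1|\,\|u_{\ell,h}\|_h\,.
\end{equation*}
The first term is $\lesssim h^{s}|u_{\ell}|_{H^{m+s}(\Om)}$ by the Lemma immediately preceding Theorem~\ref{L2}. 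For the last term, $\|u_{\ell,h}\|_h^2=a_h(u_{\ell,h},u_{\ell,h})=\lam_{\ell,h}\|\rho^{1/2}u_{\ell,h}\|_{L^2(\Om)}^2=\lam_{\ell,h}\approx\lam_{\ell}$ (using Theorem~\ref{eigvalueerrorab} for $h$ small), while $|\beta_{\ell}-1|\lesssim h^{2s}|u_{\ell}|_{H^{m+s}(\Om)}$ was already established inside the proof of Theorem~\ref{L2}; hence the last term is of the higher order $h^{2s}$. Everything therefore reduces to the middle term.

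For the middle term I would expand $P_h^{\prime}u_{\ell}$ in the discrete eigenbasis, $P_h^{\prime}u_{\ell}=\sum_{j=1}^{N}(\rho P_h^{\prime}u_{\ell},u_{j,h})_{L^2(\Om)}u_{j,h}$, and use $a_h(u_{j,h},u_{k,h})=\lam_{j,h}\delta_{jk}$ to obtain
\begin{equation*}
\|P_h^{\prime}u_{\ell}-\beta_{\ell}u_{\ell,h}\|_h^2=\sum_{j\neq\ell}\lam_{j,h}\,(\rho P_h^{\prime}u_{\ell},u_{j,h})_{L^2(\Om)}^2\,.
\end{equation*}
Substituting the identity $(\lam_{j,h}-\lam_{\ell})(\rho P_h^{\prime}u_{\ell},u_{j,h})_{L^2(\Om)}=\lam_{\ell}(\rho(u_{\ell}-P_h^{\prime}u_{\ell}),u_{j,h})_{L^2(\Om)}$ recorded above turns this sum into $\sum_{j\neq\ell}\tfrac{\lam_{j,h}\lam_{\ell}^2}{(\lam_{j,h}-\lam_{\ell})^2}(\rho(u_{\ell}-P_h^{\prime}u_{\ell}),u_{j,h})_{L^2(\Om)}^2$. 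The only point that is genuinely new compared with the $L^2$ argument of Theorem~\ref{L2} is the extra factor $\lam_{j,h}$, which a priori grows with $j$; but writing $\lam_{j,h}=\lam_{\ell}+(\lam_{j,h}-\lam_{\ell})$ and invoking the separation constant $d_{\ell}$ gives, for every $j\neq\ell$,
\begin{equation*}
\frac{\lam_{j,h}\lam_{\ell}^2}{(\lam_{j,h}-\lam_{\ell})^2}\leq\lam_{\ell}\Big(\frac{\lam_{\ell}}{|\lam_{j,h}-\lam_{\ell}|}\Big)^2+\lam_{\ell}\,\frac{\lam_{\ell}}{|\lam_{j,h}-\lam_{\ell}|}\leq\lam_{\ell}d_{\ell}^2+\lam_{\ell}d_{\ell}\,.
\end{equation*}
Pulling this uniform bound out of the sum and applying Bessel's inequality to $\rho^{1/2}(u_{\ell}-P_h^{\prime}u_{\ell})$ against the $L^2$‑orthonormal system $\{\rho^{1/2}u_{j,h}\}$ yields $\|P_h^{\prime}u_{\ell}-\beta_{\ell}u_{\ell,h}\|_h^2\lesssim\|\rho^{1/2}(u_{\ell}-P_h^{\prime}u_{\ell})\|_{L^2(\Om)}^2\lesssim h^{4s}|u_{\ell}|_{H^{m+s}(\Om)}^2$, once more by the Lemma preceding Theorem~\ref{L2}.

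Collecting the three contributions, $\|u_{\ell}-u_{\ell,h}\|_h\lesssim h^{s}|u_{\ell}|_{H^{m+s}(\Om)}+h^{2s}|u_{\ell}|_{H^{m+s}(\Om)}\lesssim h^{s}|u_{\ell}|_{H^{m+s}(\Om)}$, which is the assertion. I do not expect any serious obstacle: the decomposition and the two ``cheap'' terms are exactly as in the $L^2$ proof, and the sole new ingredient is absorbing the spectral weight $\lam_{j,h}$ into the quadratic denominator $(\lam_{j,h}-\lam_{\ell})^{-2}$, which the elementary splitting above accomplishes as soon as $h$ is small enough for the separation constant $d_{\ell}$ to be available.
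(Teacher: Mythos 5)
Your proof is correct, but it follows a genuinely different route from the paper's. The paper proves Theorem \ref{Energynormab} in three lines via the algebraic identity
$a_h(u_{\ell}-u_{\ell,h},u_{\ell}-u_{\ell,h})=\lam_{\ell}\|\rho^{1/2}(u_{\ell}-u_{\ell,h})\|_{L^2(\Om)}^2+\lam_{\ell,h}-\lam_{\ell}+2\lam_{\ell}(\rho u_{\ell},u_{\ell,h}-u_{\ell})_{L^2(\Om)}-2a_h(u_{\ell},u_{\ell,h}-u_{\ell})$,
and then cites Theorem \ref{eigvalueerrorab}, the $L^2$ bound \eqref{eigenfunctionerrorL2ab}, and (H3); since $a(u_\ell,u_\ell)=\lam_\ell$, the last two terms collapse to twice the consistency functional $(Au_{\ell},u_{\ell,h})_{L^2(\Om)}-a_h(u_{\ell},u_{\ell,h})$. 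You instead rerun the $L^2$ argument of Theorem \ref{L2} in the energy norm: triangle inequality through $P_h^{\prime}u_{\ell}$ and $\beta_{\ell}u_{\ell,h}$, expansion of $P_h^{\prime}u_{\ell}$ in the $\rho$-orthonormal discrete eigenbasis with $a_h(u_{j,h},u_{k,h})=\lam_{j,h}\delta_{jk}$, the identity $(\lam_{j,h}-\lam_{\ell})(\rho P_h^{\prime}u_{\ell},u_{j,h})_{L^2(\Om)}=\lam_{\ell}(\rho(u_{\ell}-P_h^{\prime}u_{\ell}),u_{j,h})_{L^2(\Om)}$, the separation constant $d_{\ell}$, and Bessel's inequality; the one genuinely new step, absorbing the spectral weight via $\lam_{j,h}=\lam_{\ell}+(\lam_{j,h}-\lam_{\ell})$ to get the uniform bound $\lam_{\ell}(d_{\ell}^2+d_{\ell})$, is carried out correctly, and the tail term is fine since $\|u_{\ell,h}\|_h^2=\lam_{\ell,h}$ and $|\beta_{\ell}-1|\lesssim h^{2s}$ was already proved. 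Comparing the two: the paper's identity is shorter and recycles the eigenvalue and $L^2$ error estimates already established, but the advertised rate then rests on the consistency term, and a blunt application of (H3) with test function $u_{\ell,h}$ only yields $O(h^{s})$ for the squared error, so a refinement (e.g.\ testing against $u_{\ell,h}-P_hu_{\ell}$) is implicitly needed there; your spectral route avoids consistency at this stage altogether — (H3) enters only through the lemma for $P_h^{\prime}u_{\ell}$ — and delivers the full $h^{s}$ rate term by term, at the price of constants depending on $d_{\ell}$ and of being tied to the simple-eigenvalue setting, which the paper assumes anyway.
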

\begin{proof}
In order to bound  errors of eigenfunctions in the energy norm,
we need the following decomposition:
\begin{equation}
\begin{split}
&a_h(u_{\ell}-u_{\ell,h}, u_{\ell}-u_{\ell,h})=a(u_{\ell},
u_{\ell})+a_h(u_{\ell,h}, u_{\ell,h})-2a_h(u_{\ell}, u_{\ell,h})
\\[0.5ex]
&=\lam_{\ell}\|\rho^{1/2}(u_{\ell}-u_{\ell,h})\|_{L^2(\Om)}^2+\lam_{\ell,h}-\lam_{\ell}
  +2\lam_{\ell}(\rho u_{\ell}, u_{\ell,h}-u_{\ell})-2a_h(u_{\ell},
  u_{\ell,h}-u_{\ell})\,.
\end{split}
\end{equation}
Then, the desired result follows from Theorem \ref{eigvalueerrorab},
\eqref{eigenfunctionerrorL2ab}, and the condition (H3).
\end{proof}

\section{Lower bounds for eigenvalues: an abstract theory}
This section proves that  the conditions (H1)-(H4) are sufficient conditions to guarantee
nonconforming finite element methods to yield lower bounds for eigenvalues of elliptic
operators.

\begin{theorem}\label{main}  Let $(\lam,u)$ and $(\lam_{h}, u_{h})$
be  solutions of problems  \eqref{eigen} and
\eqref{discreteeigentotal}, respectively.  Assume that  $u\in V\cap
\in H^{m+\mathcal{S}}(\Om)$ and that $h^{2s}\lesssim \|u-u_{h}\|_h^2$
with $0<s\leq\mathcal{S}\leq 1$. If the conditions (H1)--(H4) hold, then
\begin{equation}\label{LowerBound}
\lam_h\leq\lam,
\end{equation}
provided that $h$ is small enough.
\end{theorem}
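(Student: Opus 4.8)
The plan is to derive an \emph{exact} identity for $\lam-\lam_h$ and then argue that, among its terms, the energy error $\|u-u_h\|_h^2$ dominates while everything else is of strictly higher order in $h$ by virtue of (H4).

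First I would expand
\begin{equation*}
\|u-u_h\|_h^2=a_h(u,u)-2a_h(u,u_h)+a_h(u_h,u_h).
\end{equation*}
Since $a_h$ agrees with $a$ on $V$, we have $a_h(u,u)=a(u,u)=\lam$, and $a_h(u_h,u_h)=\lam_h$ because $\|\rho^{1/2}u_h\|_{L^2(\Om)}=1$. For the middle term I would invoke the orthogonality in (H4): $a_h(u-\Pi_h u,u_h)=0$ gives $a_h(u,u_h)=a_h(\Pi_h u,u_h)$, and then symmetry of $a_h$ together with \eqref{discreteeigentotal} tested against $\Pi_h u\in V_h$ gives $a_h(\Pi_h u,u_h)=a_h(u_h,\Pi_h u)=\lam_h(\rho u_h,\Pi_h u)_{L^2(\Om)}$. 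Writing $(\rho u_h,\Pi_h u)_{L^2(\Om)}=(\rho u_h,u)_{L^2(\Om)}+(\rho u_h,\Pi_h u-u)_{L^2(\Om)}$ and using $2(\rho u_h,u)_{L^2(\Om)}=2-\|\rho^{1/2}(u-u_h)\|_{L^2(\Om)}^2$ (both functions being unit vectors), I arrive at
\begin{equation*}
\lam-\lam_h=\|u-u_h\|_h^2-\lam_h\|\rho^{1/2}(u-u_h)\|_{L^2(\Om)}^2+2\lam_h(\rho u_h,\Pi_h u-u)_{L^2(\Om)}.
\end{equation*}

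It then remains to show that the last two terms are $o(h^{2s})$, or at least bounded below by $-o(h^{2s})$. By Theorem~\ref{L2}, $\|\rho^{1/2}(u-u_h)\|_{L^2(\Om)}\lesssim h^{2s}$, so, since $\lam_h$ stays bounded for small $h$ by Theorem~\ref{eigvalueerrorab}, $\lam_h\|\rho^{1/2}(u-u_h)\|_{L^2(\Om)}^2\lesssim h^{4s}$. For the cross term I would split $(\rho u_h,\Pi_h u-u)_{L^2(\Om)}=(\rho(u_h-u),\Pi_h u-u)_{L^2(\Om)}+(\rho u,\Pi_h u-u)_{L^2(\Om)}$; Cauchy--Schwarz with Theorem~\ref{L2} and the third bound in (H4) controls the first piece by $h^{2s+\mathcal{S}+\triangle\mathcal{S}}$, while the polarization identity $2(\rho u,\Pi_h u-u)_{L^2(\Om)}=-(\|\rho^{1/2}u\|_{L^2(\Om)}^2-\|\rho^{1/2}\Pi_h u\|_{L^2(\Om)}^2)-\|\rho^{1/2}(\Pi_h u-u)\|_{L^2(\Om)}^2$ together with the second and third bounds in (H4) shows the second piece is $\geq -C(h^{2s+\triangle s}+h^{2\mathcal{S}+2\triangle\mathcal{S}})$. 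Since $\triangle s,\triangle\mathcal{S}>0$ and $s\leq\mathcal{S}$, all of these are $o(h^{2s})$. Plugging everything back and using the hypothesis $h^{2s}\lesssim\|u-u_h\|_h^2$ yields $\lam-\lam_h\geq c\,h^{2s}-o(h^{2s})>0$ for $h$ small enough, i.e.\ \eqref{LowerBound}.

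The crux --- and the whole reason for introducing (H4) --- is the exact cancellation of the consistency-type quantity $a_h(u,u_h)$ in the identity above. For a generic nonconforming space this term is only $O(h^s)$, which would be far too large to control against $\|u-u_h\|_h^2\approx h^{2s}$ and would obscure the sign; (H4) furnishes an interpolant $\Pi_h u\in V_h$ whose local degrees of freedom are rich enough to enforce $a_h(u-\Pi_h u,u_h)=0$, turning the dangerous term into $\lam_h(\rho u_h,\Pi_h u)_{L^2(\Om)}$ and hence into the genuinely higher-order pieces estimated above. Once the identity is established, the rest is just accounting for powers of $h$.
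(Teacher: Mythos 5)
Your proposal is correct and follows essentially the same route as the paper: you reconstruct the Armentano--Duran-type identity (your version is just an algebraic rearrangement of the paper's expansion \eqref{expansion}, with $\|\rho^{1/2}(u-u_h)\|_{L^2(\Om)}^2$ and a cross term in place of $\|\rho^{1/2}(\Pi_hu-u_h)\|_{L^2(\Om)}^2$), using exactly the orthogonality $a_h(u-\Pi_hu,u_h)=0$ from (H4), and then show the remaining terms are of higher order than $h^{2s}$ via Theorem \ref{L2} and the two $L^2$ bounds in (H4), concluding with the saturation hypothesis. Your write-up is in fact more detailed than the paper's sketch (which cites \cite{AD04} for the identity), and your polarization treatment of $(\rho u,\Pi_hu-u)_{L^2(\Om)}$ reproduces the paper's sign dichotomy for $\|\rho^{1/2}\Pi_hu\|_{L^2(\Om)}^2-\|\rho^{1/2}u\|_{L^2(\Om)}^2$.
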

\begin{proof} Let $\Pi_h$ be the operator in the condition (H4). A similar argument of \cite{AD04} proves
\begin{equation}\label{expansion}
\begin{split}
 \lam-\lam_{h}&=\|u-u_{h}\|_h^2-\lam_{h}\|\rho^{1/2}(\Pi_hu-u_{h})\|_{L^2(\Om)}^2\\[0.5ex]
 &\quad +\lam_{h}(\|\rho^{1/2}\Pi_h u\|_{L^2(\Om)}^2-\|\rho^{1/2}u\|_{L^2(\Om)}^2).
\end{split}
\end{equation}
(We refer interested readers to Zhang et al. \cite{ZhangYangChen07} for an identity with full terms). From the abstract error estimate \eqref{eigenfunctionerrorab} it follows that
\begin{equation}
\|\rho^{1/2}(u-u_h)\|_{L^2(\Om)}\lesssim h^{2s}\,.
\end{equation}
Hence the triangle inequality and (H4) plus the saturation condition $h^{2s}\lesssim \|u-u_{h}\|_h^2$ show that the second
third term on the right-hand side of \eqref{expansion} is of
higher order than the first term.  If $\|\rho^{1/2}\Pi_h u\|_{L^2(\Om)}^2\leq \|\rho^{1/2}u\|_{L^2(\Om)}^2$, then the condition states
that the third term is of higher order than the first term; otherwise, it will be positive. This completes the proof.
\end{proof}
The condition that $h^{2s}\lesssim \|u-u_{h}\|_h^2$
is usually  referred to  as the  saturation condition in the literature.
The condition is closely related to the inverse theorem in the context of the approximation theory by  trigonometric
polynomials or splines. For the approximation by  conforming piecewise polynomials, the inverse theorem was analyzed  in \cite{BabuskaKellog1979,Widlund1977}. For  nonconforming finite element methods,  the saturation condition was first analyzed in Shi \cite{Shi1986}
  for the Wilson  element by an example, which was developed by Chen and Li \cite{ChenLi1994} by an expansion of the error.
  See \cite{KrizekRoosChen2011}  for lower bounds of  discretization errors by conforming linear/bilinear finite elements.   Babuska and Strouboulis \cite{BabuskaStrouboulis2000} analyzed Lagrange finite element methods  for  elliptic problems in  one dimension.  In  appendixes,
 we shall analyze the saturation condition for most of nonconforming finite element methods under consideration.  To our knowledge,  it is the first time   to analyze  systematically  this condition for nonconforming methods.

Since  Galerkin projection operators $P_h$  from
\eqref{Galerkin} or  their high order perturbations  of  nonconforming spaces $V_h$ are taken  as
interpolation operators $\Pi_h$,  their error estimates are dependent on only local approximation properties but not global continuity
 properties of   spaces $V_h$ while  errors $\|u-u_h\|_h$  generally depend on both properties (see Theorems \ref{L2} and \ref{Energynormab} ).
 On the other hand, the term $\|\rho^{1/2}u\|_{L^2(\Om)}^2-\|\rho^{1/2}\Pi_hu\|_{L^2(\Om)}^2$ will be either of high order or negative when we have enough many local degrees of freedom (compared to the continuity) and therefore  consistency errors in $\|u-u_h\|_h^2$ will be dominant in the sense that
 $$
\|u-u_h\|_h^2\geq \|\rho^{1/2}u\|_{L^2(\Om)}^2-\|\rho^{1/2}\Pi_hu\|_{L^2(\Om)}^2.
$$
If this happens we say  local approximation properties of  spaces $V_h$ are better than
global  continuity properties of $V_h$.  Hence, Theorem \ref{main} states that   corresponding methods of
 eigenvalue problems will produce  lower bounds for
eigenvalues for this situation. Thus, to get a lower bound  for an eigenvalue is to design
nonconforming finite element spaces with enough local degrees of
freedom when compared to global continuity properties of $V_h$.
This in fact  provides a systematic tool for the construction of
lower bounds  for eigenvalues of operators in mathematical
science.

\section{Nonconforming elements of second order elliptic operators}
This section presents  some nonconforming schemes of second order elliptic eigenvalue problems that the conditions
(H1)-(H4) proposed in Section 2 are satisfied. Let the
boundary $\pa\Om$ be divided into two parts: $\Gamma_D$ and
$\Gamma_N$ with $|\Gamma_D|>0$, and $\Gamma_D\cup\Gamma_N=\pa\Om$.
For ease of presentation, assume that \eqref{eigen} is the Poisson eigenvalue problem imposed  general boundary
conditions.

Let $\mathcal{T}_h$  be  regular $n$-rectangular triangulations of
  domains $\Om\subset\R^n$ with $2\leq n$ in the sense that
$\bigcup_{K\in \mathcal{T}_h}K=\bar{\Omega}$, two distinct elements
$K$ and $K'$ in $\mathcal{T}_h$ are either disjoint, or share an
$\ell$-dimensional hyper-plane, $\ell=0, \cdots, n-1$. Let
$\mathcal{H}_h$ denote the set of all $n-1$ dimensional hyper-planes
in $\mathcal{T}_h$ with the set of interior
  $n-1$ dimensional hyper-planes $\mathcal{H}_h(\Omega)$ and the set of
boundary  $n-1$ dimensional hyper-planes $\mathcal{H}_h(\pa\Omega)$.  $\mathcal{N}_h$ is the set of nodes of $\mathcal{T}_h$
with the set of  internal nodes $\mathcal{N}_h(\Omega)$ and the set
of boundary nodes $\mathcal{N}_h(\pa \Omega)$.

For each $K\in \mathcal{T}_h$,  introduce the following affine
invertible transformation
$$
F_K:\hK\rightarrow K, x_i=h_{x_i,K}\xi_i+x_i^{0}
$$
with  the center  $(x_1^{0}, x_2^0, \cdots, x_n^0)$ and the
  lengths  $2h_{x_i,K}$ of $K$ in the directions of the $x_i$-axis,
  and the reference element
$\hK=[-1,1]^n$.  In addition, set $h=\max_{1\leq i\leq n}
h_{x_i}$.

Over the above mesh $\cT_h$, we shall consider two classes of
nonconforming element methods for the eigenvalue problem
\eqref{eigen}, namely, the Wilson element in any dimension,
the enriched nonconforming  rotated $Q_1$ element in any dimension.

Let $V_h$  be  discrete spaces of aforementioned
nonconforming element methods.  The finite  element approximation of
Problem \eqref{eigen} is defined as in \eqref{discreteeigentotal}.

For all the elements, one can use  continuity and boundary
conditions for  discrete  spaces $V_h$ given below  to verify the conditions
 (H1)-(H3), see \cite{Ltz04, Rannacher92, ShiWang10,Wilson73} for further details. Let $(\lam, u)$ and
$(\lam_h, u_h)$ be solutions to  problems
\eqref{eigen} and \eqref{discreteeigentotal},  by Theorems
\ref{L2}, \ref{eigvalueerrorab}, and \ref{Energynormab} we have
\begin{equation}\label{PoissonFiniteEstimate}
|\lam-\lam_h|+h^{\mathfrak{s}}\|u-u_h\|_h+\|\rho^{1/2}(u-u_h)\|_{L^2(\Om)}\lesssim
h^{2\mathfrak{s}}\,,
\end{equation}
provided that $u\in V\cap H^{1+\mathfrak{s}}(\Om)$ with $0<\mathfrak{s}\leq 1$.

We shall analyze the key  condition (H4) for these elements in the
subsequent subsections.

\subsection{The Wilson element in any dimension}\label{sub5.1}

Denote by $Q_{Wil}(\hat{K})$ the nonconforming  Wilson  element
space \cite{ShiWang10,Wilson73} on the reference element
defined by
 \be\label{nonconformingnDWil} Q_{Wil}(\hat{K})
 =Q_1(\hK)+\sspan\{ \xi_1^{2}-1, \xi_2^{2}-1, \cdots, \xi_n^2-1\},
  \ee
 where $Q_1(\hK)$ is the  space of polynomials of degree$\leq 1$ in
 each variable.  The  nonconforming  Wilson  element space $V_h$ is then defined
as
\begin{equation}\label{WilsonnD}
 V_h:=\begin{array}[t]{l}\big\{v\in L^2(\Om):
 v|_{K}\circ F_K\in Q_{Wil}(\hK) \text{ for each }K\in \mathcal{T}_h,
 v \text{ is continuous }\\[0.5ex]
 \text{ at  internal nodes,  and vanishes at boundary nodes on
 }\Gamma_D
 \big\}\,.
 \end{array}\nn
\end{equation}
The degrees of freedom  read
 \begin{equation*}
    v(a_j),1\leq j\leq2^n \text{ and }
    \frac{1}{|K|}\int_K\frac{\partial^2v}{\partial x^2_i} d x,1\leq i\leq n,
   \end{equation*}
   where $a_j$ denote vertexes of element $K$.

In order to show the condition  (H4),  let $P_h$ be the Galerkin
projection operator defined in \eqref{Galerkin}.  The approximation property of the operator $P_h$ reads
\begin{equation}
h\|u-P_hu\|_h+\|u-P_hu\|_{L^2(\Om)} \lesssim
h^{2}|u|_{H^2(\Om)}\,,
\end{equation}
provided that $u\in V\cap H^{2}(\Om)$. This plus \eqref{PoissonFiniteEstimate} lead to
\begin{equation}\label{eq5.4}
\begin{split}
\lambda_h(\rho (P_hu-u), P_hu+u)_{L^2(\Om)}&=\lambda(\rho (P_hu-u), P_hu+u)_{L^2(\Om)}+\mathcal{O}(h^{4})\\
&=2\lambda(\rho (P_hu-u), u)_{L^2(\Om)}+\mathcal{O}(h^{4}).
\end{split}
\end{equation}
To analyze the term $\lambda(\rho (P_hu-u), u)_{L^2(\Om)}$,
let $I_h$ be the canonical interpolation operator for the Wilson element, which admits the following error estimates:
\begin{equation}\label{InterpolationErrorWilson-smooth}
h\|u-I_hu\|_h+\|u-I_hu\|_{L^2(\Om)}\lesssim h^{2+\mathfrak{s}}|u|_{H^{2+\mathfrak{s}}(\Om)},
\end{equation}
provided that $u\in V\cap H^{2+\mathfrak{s}}(\Om)$ with
$0<\mathfrak{s}\leq 1$. Since $\|u-P_hu\|_h\lesssim h^{1+\mathfrak{s}}$ provided that $u\in V\cap H^{2+\mathfrak{s}}(\Om)$ with
$0<\mathfrak{s}\leq 1$,
\begin{equation*}
\lambda(\rho u,  P_hu-I_hu)_{L^2(\Om)}-a_h(u, P_hu-I_hu)
\lesssim h|u|_{H^2(\Om)}\|P_hu-I_hu\|_h\lesssim h^{2+\mathfrak{s}}\|u\|_{H^{2+\mathfrak{s}}(\Om)}^2.
\end{equation*}
This and \eqref{InterpolationErrorWilson-smooth} state
\begin{equation}\label{eq5.6}
\begin{split}
\lambda(\rho (P_hu-u), u)_{L^2(\Om)}&=\lambda(\rho u,  P_hu-u)_{L^2(\Om)}-a_h(u, P_hu-u)+a_h(u-P_hu, P_hu-u)\\
&=a_h(u-I_hu,u)+\mathcal{O}(h^{2+\mathfrak{s}}).
\end{split}
\end{equation}
To analyze the term $a_h(u-I_hu,u)$, let $I_K$ denote the restriction of $I_h$ on element $K$.  Then we have the following
 result.
 \begin{lemma} For  any $u\in P_3(K)$ and $v\in P_1(K)$, it holds that
 \begin{equation}\label{eq5.7}
 (\nabla (u-I_Ku), \nabla v)_{L^2(K)}=-\sum\limits_{i=1}^n\sum\limits_{j\not = i}\frac{h_{x_i,K}^2}{3}\int_K \frac{\pa^3 u}{\pa x_i^2\pa x_j}\frac{\pa v}{\pa x_j}dx.
 \end{equation}
 \end{lemma}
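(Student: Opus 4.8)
The plan is to reduce the identity to a purely element-local computation on the reference cube $\hat K$ and then pull everything back by the affine map $F_K$. First I would observe that both sides of \eqref{eq5.7} are bilinear in $(u,v)$ and that, since $v\in P_1(K)$, the test gradient $\nabla v$ has constant components; so it suffices to compute $(\partial_k(u-I_Ku),1)_{L^2(K)}$ for each coordinate direction $k$ and each monomial $u$ spanning $P_3(K)$. The key structural fact is the definition of the Wilson interpolation: $I_Ku$ matches $u$ at the $2^n$ vertices and matches the cell averages $\tfrac1{|K|}\int_K \partial_{x_i}^2 u\,dx$ for $1\le i\le n$. Equivalently, on $\hat K$, writing $\hat u=u\circ F_K$, the error $\hat e=\hat u-\widehat{I_K u}$ vanishes at all vertices of $\hat K$ and has zero average second pure derivatives $\int_{\hat K}\partial_{\xi_i}^2\hat e=0$.

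The next step is to identify, for a cubic polynomial, exactly which part survives in the interpolation error. Since $Q_{Wil}(\hat K)=Q_1(\hat K)+\mathrm{span}\{\xi_i^2-1\}$ reproduces all of $Q_1$ and all pure quadratic bubbles, the only cubic monomials contributing to $\hat e$ are those not reproduced; a short check shows that the error of a cubic monomial $\xi_i^2\xi_j$ ($j\ne i$), once its vertex values and second-derivative averages are subtracted, is a fixed multiple of $\xi_j(\xi_i^2-1)$ — indeed $I_K$ applied to $\xi_i^2\xi_j$ must be the unique element of $Q_{Wil}$ agreeing at vertices and in the relevant averages, and on $\hat K$ one computes that this forces $\hat e = \xi_i^2\xi_j - \xi_j = \xi_j(\xi_i^2-1)$ up to the contribution of the $(\xi_i^2-1)$-bubble, whose coefficient is pinned by the second-derivative average condition. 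Monomials with three distinct factors $\xi_i\xi_j\xi_k$, or $\xi_i^3$, or anything of lower degree in the "bad" directions, are reproduced exactly or have vanishing relevant integrals; this I would verify by direct substitution on $\hat K$ using symmetry (odd/even parity in each $\xi_i$) to kill most terms.

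Then I would assemble: expanding $u\in P_3(K)$ in Taylor form about the cell center and using the linearity just established, $u-I_Ku$ on $K$ equals $\sum_{i}\sum_{j\ne i}c_{ij}\,(x_j-x_j^0)\big((x_i-x_i^0)^2-h_{x_i,K}^2\big)/(\text{normalization})$ where $c_{ij}$ is the coefficient of $\tfrac{\partial^3 u}{\partial x_i^2\partial x_j}$, which for a cubic is constant and extracts as $\tfrac12\partial_{x_i}^2\partial_{x_j}u$ — wait, more carefully: the coefficient of $(x_j-x_j^0)(x_i-x_i^0)^2$ in $u$ is $\tfrac12\partial_{x_i}^2\partial_{x_j}u$. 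Integrating $\partial_{x_j}$ of this bubble against the constant $\partial_{x_j}v$ over $K$, and using $\int_K\big((x_i-x_i^0)^2-h_{x_i,K}^2\big)\,dx = |K|\big(\tfrac13 h_{x_i,K}^2 - h_{x_i,K}^2\big)=-\tfrac23 h_{x_i,K}^2|K|$ together with $\int_K\partial_{x_j}\big[(x_j-x_j^0)(\cdots)\big]=\int_K(\cdots)$, produces precisely the factor $-\tfrac{h_{x_i,K}^2}{3}$ and the derivative $\tfrac{\partial^3 u}{\partial x_i^2\partial x_j}$, after combining the $\tfrac12$ from the Taylor coefficient with the $\tfrac23$ from the integral. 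Finally, since $u\in P_3(K)$, $\tfrac{\partial^3 u}{\partial x_i^2\partial x_j}$ is constant on $K$, so it can be kept inside or outside the integral at will, giving exactly \eqref{eq5.7}.

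The main obstacle I anticipate is the bookkeeping in the second step: correctly pinning down the coefficient of the $(\xi_i^2-1)$-bubble in $I_K(\xi_i^2\xi_j)$ from the second-derivative-average degrees of freedom, and making sure that the mixed cubic $\xi_i\xi_j\xi_k$ and the cubes $\xi_i^3$ genuinely contribute nothing to $(\partial_{x_j}(u-I_Ku),1)_{L^2(K)}$. Both reduce to parity arguments on the symmetric cube $\hat K$, but they must be done for every pair $(i,j)$, so I would organize the computation once and for all on $\hat K$ by the monomial, record a small table of $I_{\hat K}$ applied to each degree-$\le 3$ monomial, and only then pull back via $F_K$ (which multiplies each integral by the Jacobian and rescales $\partial_{\xi_i}\mapsto h_{x_i,K}\partial_{x_i}$, accounting for the $h_{x_i,K}^2$).
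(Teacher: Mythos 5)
Your proposal is correct and follows essentially the same route as the paper: it amounts to writing out the Wilson interpolation error of a cubic explicitly on the (scaled) reference element, namely that only the $\xi_j(\xi_i^2-1)$-type terms with Taylor coefficient $\tfrac12\partial_{x_i}^2\partial_{x_j}u$ survive in the pairing with the constant $\nabla v$ (the $\xi_i^3-\xi_i$ terms integrate to zero), and then performing the direct calculation $\int_K\big((x_i-x_i^0)^2-h_{x_i,K}^2\big)dx=-\tfrac23 h_{x_i,K}^2|K|$, which is exactly the paper's explicit error representation followed by ``a direct calculation.''
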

\begin{proof} The definition of the interpolation operator $I_K$ leads to
\begin{equation*}
u-I_Ku=\sum\limits_{i=1}^n\frac{h_{x_i,K}^3}{6}\frac{\pa^3 u}{\pa x_i^3}(\xi_i^3-\xi_i)+\sum\limits_{i=1}^n\sum\limits_{j\not= i}\frac{h_{x_i,K}^2 h_{x_j,K}}{2}\frac{\pa^3 u}{\pa x_i^2\pa x_j}(\xi_i^2\xi_j-\xi_j).
\end{equation*}
A direct calculation proves
\begin{equation*}
 (\nabla (u-I_Ku), \nabla v)_{L^2(K)}=-\sum\limits_{i=1}^n\sum\limits_{j\not = i}\frac{h_{x_i,K}^2}{3}\int_K \frac{\pa^3 u}{\pa x_i^2\pa x_j}\frac{\pa v}{\pa x_j}dx,
 \end{equation*}
which completes the proof.
\end{proof}

Given any  element $K$,  define $J_{\ell,K}v\in P_\ell(K)$ by
\begin{equation}\label{QuasiInterpolation}
\int_{K}\na^{i}J_{\ell,K}v dx=\int_K \na^{i}v dx, i=0, \cdots, \ell,
\end{equation}
for any $v\in H^\ell(K)$.  Note that the operator $J_{\ell,K}$ is
well-defined.  Let $\Pi_K^0$ denote the constant projection operator over $K$, namely,
\begin{equation*}
\Pi_K^0v:=\frac{1}{|K|}\int_Kvdx \text{ for any }v\in L^2(K).
\end{equation*}
 The property of operator $J_{\ell,K}$ reads
\begin{equation}\label{eq5.9}
\|\nabla^i(v-J_{\ell,K}v)\|_{L^2(K)}\lesssim h_K^{\ell-i} \|\nabla^\ell (v-J_{\ell,K}v)\|_{L^2(K)}\text{ and } \nabla^\ell J_{\ell,K}=\Pi^0_K\nabla^\ell v \text{ for any }v\in H^\ell(K).
\end{equation}
\begin{lemma} For  uniform meshes, it holds that
\begin{equation}\label{eq5.6bb}
 (\nabla_h (u-I_hu), \nabla u)_{L^2(\Om)}=\sum\limits_{i=1}^n\sum\limits_{j\not = i}\frac{h_{x_i,K}^2}{3}\int_K \bigg(\frac{\pa^2 u}{\pa x_i\pa x_j}\bigg)^2dx+o(h^2),
 \end{equation}
 provided that $u\in H^3(\Om)$ and the meshsize is small enough.
\end{lemma}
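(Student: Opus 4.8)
The plan is to start from the elementwise identity \eqref{eq5.7}, which for $u\in P_3(K)$ and $v\in P_1(K)$ gives
\[
 (\nabla (u-I_Ku), \nabla v)_{L^2(K)}=-\sum_{i=1}^n\sum_{j\not = i}\frac{h_{x_i,K}^2}{3}\int_K \frac{\pa^3 u}{\pa x_i^2\pa x_j}\frac{\pa v}{\pa x_j}\dx,
\]
and to use it locally after replacing $u$ by a suitable low-order approximation so that the polynomial hypotheses apply. Concretely, on each $K$ I would write $u = J_{3,K}u + (u-J_{3,K}u)$ with $J_{\ell,K}$ as in \eqref{QuasiInterpolation}, and test against $\nabla u$, further approximated on the right slot by $\nabla J_{2,K}u\in P_1(K)$. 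The point of the operators $J_{\ell,K}$ is that \eqref{eq5.9} controls the remainders: $\|u-J_{3,K}u\|$-type terms carry an extra $h_K$ relative to $H^3$ seminorms on $K$, and summing the squares of third derivatives over the mesh is $O(1)$ when $u\in H^3(\Om)$, so each remainder contributes only $o(h^2)$ after summation (this is where the $H^3(\Om)$ hypothesis and the smallness of $h$ enter). One also needs $I_h$ to commute appropriately with these reductions, i.e. $I_K J_{3,K}u$ differs from $I_K u$ by a higher-order term, which again follows from the interpolation error estimate \eqref{InterpolationErrorWilson-smooth} and \eqref{eq5.9}.

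Next I would insert $v=J_{2,K}u$ into \eqref{eq5.7} (legitimate since $J_{3,K}u\in P_3(K)$ and $J_{2,K}u\in P_2(K)$, whose gradient is in $P_1(K)$ — so strictly one first reduces $v$ further to $P_1$, picking up $o(h^2)$ by \eqref{eq5.9}). The identity then yields, up to $o(h^2)$,
\[
 (\nabla_h(u-I_hu),\nabla u)_{L^2(\Om)} = -\sum_{K}\sum_{i=1}^n\sum_{j\not=i}\frac{h_{x_i,K}^2}{3}\int_K \frac{\pa^3 u}{\pa x_i^2\pa x_j}\,\frac{\pa^2 u}{\pa x_j^2}\dx + o(h^2).
\]
Here is the step where uniformity of the mesh is used: on a uniform mesh $h_{x_i,K}$ is independent of $K$, so the factor $h_{x_i,K}^2$ can be pulled out of the sum over $K$, and the sum over $K$ of $\int_K (\pa_i^2\pa_j u)(\pa_j^2 u)\dx$ reassembles into a single integral $\int_\Om (\pa_i^2\pa_j u)(\pa_j^2 u)\dx$. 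An integration by parts in $x_i$ (with the boundary terms handled by the boundary conditions built into $V$, or absorbed, after a further density/regularization argument, into the $o(h^2)$ term since genuine $H^3$ boundary traces need care) converts $\int_\Om (\pa_i^2\pa_j u)(\pa_j^2 u)\dx$ into $-\int_\Om (\pa_i\pa_j u)\,\pa_i(\pa_j^2 u)\dx = -\int_\Om (\pa_i\pa_j u)(\pa_j^2\pa_i u)\dx$, i.e. it equals $-\int_\Om (\pa_i\pa_j u)^2\dx$ after recognizing $\pa_i\pa_j^2 u = \pa_j(\pa_i\pa_j u)$ and integrating by parts once more in $x_j$ — so that $-\frac{h_{x_i,K}^2}{3}\int_K(\pa_i^2\pa_j u)(\pa_j^2 u)$ becomes $+\frac{h_{x_i,K}^2}{3}\int_K(\pa_i\pa_j u)^2$, matching \eqref{eq5.6bb}.

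The main obstacle I anticipate is the bookkeeping of the lower-order reductions: one must justify, uniformly in $h$ and with the correct power of $h$ to spare, that replacing $u$ by $J_{3,K}u$ in the left slot, $\nabla u$ by a $P_1$ approximation in the right slot, and $I_h u$ by $I_h J_{3,K}u$ each costs only $o(h^2)$ after summing over $\cT_h$. This is routine in spirit — it is the usual "commuting-diagram plus Bramble–Hilbert" mechanism — but it requires care because we are extracting a term that is itself exactly of order $h^2$, so a remainder that is merely $O(h^2)$ would not do; one genuinely needs the extra fractional or integer power furnished by \eqref{eq5.9} together with $u\in H^3(\Om)$ (and, if one wants the clean statement, a short approximation argument to deal with $H^3$ boundary traces in the integration by parts). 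The other, minor, subtlety is that the $J$-operators and the uniform-mesh cancellation must be applied in the right order so that the pulled-out factors $h_{x_i,K}^2$ are genuinely constant before the sum over $K$ is converted to an integral.
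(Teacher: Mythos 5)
Your overall route is the same as the paper's: reduce the left slot to $J_{3,K}u$, reduce the right slot to a $P_1(K)$ object (the paper simply uses $J_{1,K}u$ there), apply the elementwise identity \eqref{eq5.7}, control the commutator/remainder terms through \eqref{eq5.9} and the density of piecewise constants in $L^2$, and then use uniformity of the mesh to reassemble the elementwise sums and integrate by parts. That part of your plan, including the $o(h^2)$ bookkeeping, is exactly the mechanism the paper uses.

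However, the central computation is wrong as written. Substituting $v\approx u$ into \eqref{eq5.7} produces the factor $\frac{\pa v}{\pa x_j}\approx\frac{\pa u}{\pa x_j}$, a \emph{first} derivative; your displayed intermediate identity has $\frac{\pa^2 u}{\pa x_j^2}$ instead. The subsequent claim that $\int_\Om(\pa_i^2\pa_j u)(\pa_j^2 u)\,dx$ equals $-\int_\Om(\pa_i\pa_j u)^2\,dx$ after two integrations by parts cannot hold: the first expression carries five derivatives of $u$ in total and the second four, and integration by parts does not change that count. Concretely, one integration by parts in $x_i$ turns your term into $-\int_\Om(\pa_i\pa_j u)\,\pa_j(\pa_i\pa_j u)\,dx=-\tfrac12\int_\Om\pa_j\big((\pa_i\pa_j u)^2\big)\,dx$ up to boundary contributions, i.e.\ a pure boundary quantity, not the desired quadratic form. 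With the corrected factor $\pa_j u$ the argument closes exactly as in the paper: a single integration by parts in $x_i$ of $-\int_\Om(\pa_i^2\pa_j u)(\pa_j u)\,dx$ yields $+\int_\Om(\pa_i\pa_j u)^2\,dx$ plus face terms; on the uniform mesh the interior face contributions cancel, and on boundary faces perpendicular to the $x_i$-axis the product $(\pa_i\pa_j u)(\pa_j u)$ vanishes because for $j\neq i$ the derivative $\pa_j$ is tangential there and $u$ satisfies the homogeneous boundary condition. This structural cancellation is genuinely needed: those boundary terms are generically of order $h^2$, the same order as the term being extracted, so they cannot be ``absorbed into $o(h^2)$'' by a density or regularization argument as you suggest.
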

\begin{proof}
A combination of  \eqref{InterpolationErrorWilson-smooth} and \eqref{eq5.9} leads to
\begin{equation*}
\begin{split}
&(\nabla_h(u-I_hu),\nabla u)_{L^2(\Om)}
=\sum\limits_{K\in\cT_h}(\nabla (u-I_Ku),\nabla u)_{L^2(K)}\\
&= \sum\limits_{K\in\cT_h}(\nabla (u-I_Ku),\nabla J_{1,K} u)_{L^2(K)}+\mathcal{O}(h^3).
\end{split}
\end{equation*}
The operator $J_{3,K}$ yields the following decomposition
\begin{equation}\label{eq5.10}
\begin{split}
\sum\limits_{K\in\cT_h}(\nabla (u-I_Ku),\nabla J_{1,K} u)_{L^2(K)}
& =\sum\limits_{K\in\cT_h}(\nabla (I-I_K)J_{3,K}u,\nabla J_{1,K} u)_{L^2(K)}\\
&\quad +\sum\limits_{K\in\cT_h}(\nabla (I-I_K)(I-J_{3,K})u,\nabla J_{1,K} u)_{L^2(K)}.
\end{split}
\end{equation}
It follows from \eqref{InterpolationErrorWilson-smooth} and \eqref{eq5.9} that the second term on the right-hand side of the above equation can be
estimated as
\begin{equation*}
\sum\limits_{K\in\cT_h}(\nabla (I-I_K)(I-J_{3,K})u,\nabla J_{1,K} u)_{L^2(K)}\lesssim \sum\limits_{K\in\cT_h}h_K^2\|(I-\Pi_K^0)\nabla^3u\|_{L^2(K)}\|\nabla u\|_{L^2(K)}=o(h^2),
\end{equation*}
since piecewise constant functions are dense in the space $L^2(\Omega)$ when the meshsize is small enough. The first term on the right-hand side of \eqref{eq5.10} can be analyzed by \eqref{eq5.7},
 which reads
 \begin{equation*}
 \begin{split}
 \sum\limits_{K\in\cT_h}(\nabla (I-I_K)J_{3,K}u,\nabla J_{1,K} u)_{L^2(K)}&=
-\sum\limits_{K\in\cT_h}
 \sum\limits_{i=1}^n\sum\limits_{j\not = i}\frac{h_{x_i,K}^2}{3}\int_K \frac{\pa^3 J_{3,K}u}{\pa x_i^2\pa x_j}\frac{\pa J_{1,K} u}{\pa x_j}dx\\
 &=
-\sum\limits_{K\in\cT_h}
 \sum\limits_{i=1}^n\sum\limits_{j\not = i}\frac{h_{x_i,K}^2}{3}\int_K \frac{\pa^3 u}{\pa x_i^2\pa x_j}\frac{\pa  u}{\pa x_j}dx+o(h^2),
\end{split}
 \end{equation*}
 when the meshsize is small enough.  Since the mesh is uniform and $\frac{\pa^2 u}{\pa x_i \pa x_j}\frac{\pa  u}{\pa x_j}$ vanish on the boundary
 which is perpendicular to $x_i$ axises,  elementwise integrations by parts complete the proof.
\end{proof}
A summary of \eqref{eq5.4}, \eqref{eq5.6} and \eqref{eq5.6bb} proves that
\begin{equation}
\lambda_h(\rho (P_hu-u), P_hu+u)_{L^2(\Om)}\geq 0
\end{equation}
when the meshsize is small enough and $u\in H^3(\Om)$.  In  appendix  A,  we prove that $h \lesssim \|u-u_h\|_h$
when $u\in H^{2+\mathfrak{s}}(\Om)$. Therefore, the condition (H4) holds for the Wilson element when $u\in H^3(\Om)$ and the
 mesh is uniform.

\subsection{The enriched nonconforming rotated $Q_1$ element in any
dimension}  Denote by $Q_{EQ}(K)$ the enriched nonconforming rotated $Q_1$
element space defined by \cite{Ltz04}
 \begin{equation}\label{nonconformingEQ}
Q_{EQ}({K}):=P_1(K)+\sspan\{ x_1^{2}, x_2^2, \cdots, x_n^2\}.
 \end{equation}
 The enriched nonconforming rotated
$Q_1$ element space $V_h$ is then defined by
\begin{equation}\label{EQrotated}
 V_h:=\begin{array}[t]{l}\big\{v\in L^2(\Om):
 v|_{K}\in Q_{EQ}(K) \text{ for each }K\in \cT_h,  \int_f[v]df=0,\\
 \text{ for all internal $n-1$ dimensional  hyper-planes } f\,,
 \text{ and }\int_f vdf=0 \text{ for all $f$ on  } \Gamma_D
 \big\}\,.
 \end{array}\nn
\end{equation}
Here and throughout this paper, $[v]$ denotes the jump of $v$
 across $f$. For the enriched  nonconforming rotated $Q_{1}$ element, we define
the interpolation operator  $\Pi_{h}:H_D^{1}(\Om )\rightarrow V_h$
by
\begin{equation}\label{InterpolationEQ}
\begin{split}
&\int_{f} \Pi_{h} vdf =\int_{f}v df \text{ for any } v \in
H_D^{1}(\Om ),
\text{  $f\in\mathcal{H}_h$ },\\
&\int_K \Pi_{h} v dx =\int_K  v dx  \text{ for any $K\in \cT_h$}.
\end{split}
\end{equation}
For this interpolation operator, we have
\begin{lemma}\label{InterpolationErrorEQ}  There holds that
\begin{equation}
\|u-\Pi_h u\|_{L^2(K)}\lesssim h^{2}|u|_{H^2(K)} \text{for any }
u\in H^2(K) \text{ and } K\in \cT_h,
\end{equation}
\begin{equation}
\|u-\Pi_h u\|_{L^2(K)}\lesssim h^{1+\mathfrak{s}}|u|_{H^{1+\mathfrak{s}}(K)}\text{for any
} u\in H^{1+\mathfrak{s}}(K) \text{ with } 0<\mathfrak{s}<1 \text{ and } K\in \cT_h.
\end{equation}
\end{lemma}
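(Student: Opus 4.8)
The assertion is purely local, so I would prove it on a fixed reference element and transport it by the affine maps $F_K$. The first thing to record is that $\Pi_h$ is defined elementwise: on each $K\in\cT_h$ the operator $\Pi_K:=\Pi_h|_K$ is characterised by the $2n$ face moments $\int_f\Pi_Kv\,df=\int_fv\,df$ together with the volume moment $\int_K\Pi_Kv\,dx=\int_Kv\,dx$, and these $2n+1=\dim Q_{EQ}(K)$ conditions are unisolvent on $Q_{EQ}(K)$ by \cite{Ltz04}. Since $P_1(K)\subset Q_{EQ}(K)$ and $\Pi_K$ fixes $Q_{EQ}(K)$ pointwise, in particular $\Pi_Kp=p$ for every $p\in P_1(K)$; this polynomial invariance is what drives both estimates. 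Note also that the face moments make sense for $u\in H^{1+\mathfrak{s}}(K)$ with any $\mathfrak{s}>0$, since such $u$ already lies in $H^1(K)$ and the traces on the $(n-1)$-dimensional faces are integrable.

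\textbf{Reference estimate.} Let $\hat\Pi$ be the corresponding interpolation operator on $\hat K=[-1,1]^n$. Its defining functionals are the $2n$ face averages and the volume average of $\hat v$; each face average is continuous on $H^1(\hat K)$ by the trace theorem, and the volume average is continuous already on $L^2(\hat K)$, so $\hat\Pi:H^1(\hat K)\to Q_{EQ}(\hat K)\subset L^2(\hat K)$ is bounded, and hence so is its restriction to $H^{1+t}(\hat K)$ for every $0\le t\le 1$. Combining this boundedness with the invariance $\hat\Pi\hat p=\hat p$ on $P_1$, for any $\hat p\in P_1$ we get $\|\hat u-\hat\Pi\hat u\|_{L^2(\hat K)}=\|(\hat u-\hat p)-\hat\Pi(\hat u-\hat p)\|_{L^2(\hat K)}\lesssim\|\hat u-\hat p\|_{H^{1+t}(\hat K)}$; taking the infimum over $\hat p\in P_1$ and invoking the (fractional) Bramble--Hilbert / Deny--Lions lemma, whose admissible polynomial space is exactly $P_1$ because $|\cdot|_{H^{1+t}(\hat K)}$ annihilates affine functions, yields
\[
\|\hat u-\hat\Pi\hat u\|_{L^2(\hat K)}\lesssim|\hat u|_{H^{1+t}(\hat K)},\qquad 0\le t\le 1 .
\]
Choosing $t=1$ gives the reference form of the first bound and $t=\mathfrak{s}$ that of the second; $t=0$ gives the auxiliary first-order bound $\|\hat u-\hat\Pi\hat u\|_{L^2(\hat K)}\lesssim|\hat u|_{H^1(\hat K)}$.

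\textbf{Scaling back.} Pulling back through $F_K$, the $H^2$ estimate follows from the classical affine-equivalence argument for shape-regular $n$-rectangles: $\|u-\Pi_Ku\|_{L^2(K)}$ scales like $h^{n/2}$ while $|u|_{H^2(K)}$ scales like $h^{2-n/2}$, up to constants depending only on the aspect ratio of $K$, producing the factor $h^2$. For the fractional estimate I would either scale the Gagliardo seminorm directly, using that $|u|_{H^{1+\mathfrak{s}}(K)}$ behaves like $h^{1+\mathfrak{s}-n/2}\,|\hat u|_{H^{1+\mathfrak{s}}(\hat K)}$ with aspect-ratio-dependent constants, or, more robustly, obtain it by real interpolation between the first-order bound $\|u-\Pi_Ku\|_{L^2(K)}\lesssim h\,|u|_{H^1(K)}$ and the $H^2$ bound just established, via $[H^1(K),H^2(K)]_{\mathfrak{s},2}=H^{1+\mathfrak{s}}(K)$ together with the corresponding identification of the quotient spaces modulo $P_1$, so that the interpolated inequality carries the seminorm $|u|_{H^{1+\mathfrak{s}}(K)}$ and the exponent $h^{1+\mathfrak{s}}$.

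\textbf{Main obstacle.} All of the ingredients are classical; the only point that needs genuine care is the fractional-order book-keeping, namely checking that the constants in the reference-element estimate and in the scaling law stay uniform over all $K\in\cT_h$ under the possibly anisotropic maps $F_K$. This is exactly where shape regularity (a bounded aspect ratio) enters, to keep the face-average functionals uniformly bounded after the change of variables; and, should one take the interpolation route, one must correctly identify $[H^1(K)/P_1,H^2(K)/P_1]_{\mathfrak{s},2}$ with $H^{1+\mathfrak{s}}(K)/P_1$ in order to interpolate the seminorm endpoint bounds legitimately.
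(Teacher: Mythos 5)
Your proposal is correct in substance, but it follows a different route from the paper. The paper's own proof is a two-liner that exploits the structure of $\Pi_h$ directly on the physical element: since $u-\Pi_h u$ has vanishing mean (on the faces and on $K$ itself, by the defining moment conditions \eqref{InterpolationEQ}), a Poincar\'e inequality gives $\|u-\Pi_h u\|_{L^2(K)}\lesssim h_K\|\nabla(u-\Pi_h u)\|_{L^2(K)}$, and the two asserted bounds then follow by citing the standard $H^1$-seminorm interpolation error estimates, with interpolation space theory covering the fractional case $u\in H^{1+\mathfrak{s}}(K)$. You instead run the generic affine-equivalence machinery for the $L^2$ error itself: unisolvence and $P_1$-reproduction of $\Pi_K$, boundedness of the degrees of freedom on $H^1(\hat K)$ via the trace theorem, a (fractional) Deny--Lions/Bramble--Hilbert argument on the reference element, and scaling back under shape regularity, with real interpolation between the $H^1$ and $H^2$ endpoint bounds as an alternative for the fractional exponent. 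Your route is more self-contained (it does not presuppose the gradient-error estimates the paper cites) but needs the fractional Bramble--Hilbert lemma and careful uniformity of constants; the paper's route is shorter because the zero-mean property of $u-\Pi_h u$ reduces everything to known first-order estimates. One small slip to fix in your write-up: the scaling relation for the Gagliardo seminorm is stated with the hatted and unhatted quantities interchanged --- the correct relation is $|\hat u|_{H^{1+\mathfrak{s}}(\hat K)}\approx h^{1+\mathfrak{s}-n/2}|u|_{H^{1+\mathfrak{s}}(K)}$ (equivalently $|u|_{H^{1+\mathfrak{s}}(K)}\approx h^{n/2-1-\mathfrak{s}}|\hat u|_{H^{1+\mathfrak{s}}(\hat K)}$); with the relation as you literally wrote it the exponents would not combine to $h^{1+\mathfrak{s}}$, whereas with the corrected one (or via your interpolation alternative) the claimed bounds follow.
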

\begin{proof}Since $u-\Pi_h u$ has vanishing mean
on $n-1$ dimensional  hyper-plane of  $K$, it follows from the Poincare inequality that
\begin{equation*}
\|u-\Pi_hu\|_{L^2(K)}\lesssim h_K\|\na (u-\Pi_h u)\|_{L^2(K)}.
\end{equation*}
Then the desired result follows from the usual interpolation theory
and the interpolation space theory for the singular case $u\in
H^{1+\mathfrak{s}}(K)$.
\end{proof}

\begin{lemma}\label{EnrichedNQ1} For the enriched nonconforming rotated
$Q_1$ element, it holds the condition (H4).
\end{lemma}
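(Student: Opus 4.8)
The plan is to verify the three displays in \eqref{estimateH2} for the interpolation operator $\Pi_h$ of \eqref{InterpolationEQ}; here $m=1$, so $u\in V\cap H^{1+\mathcal{S}}(\Om)$. The crux is the \emph{global} orthogonality $a_h(u-\Pi_hu,v_h)=0$ for every $v_h\in V_h$, of which the first identity in \eqref{estimateH2} is the special case $v_h=u_h$. I would prove it by an elementwise integration by parts: on each $K\in\cT_h$,
\begin{equation*}
(\na(u-\Pi_hu),\na v_h)_{L^2(K)}=-\int_K(u-\Pi_hu)\,\Delta v_h\dx+\int_{\pa K}(u-\Pi_hu)\,(\na v_h\cdot\bn)\ds .
\end{equation*}
Since $v_h|_K\in Q_{EQ}(K)=P_1(K)+\sspan\{x_1^2,\dots,x_n^2\}$, the function $\Delta v_h$ is constant on $K$, so the volume integral vanishes because $\int_K(u-\Pi_hu)\dx=0$ by the second line of \eqref{InterpolationEQ}. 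Likewise $\pa v_h/\pa x_j$ depends only on $x_j$, hence $\na v_h\cdot\bn$ is constant on every face $f$ of $K$, each face being orthogonal to a coordinate axis; and $\int_f(u-\Pi_hu)\,df=0$ on every face, by the first line of \eqref{InterpolationEQ} together with $\int_f u\,df=0$ on $\Gamma_D$ (as $u\in H_D^1(\Om)$), so the boundary integral vanishes too. Summing over $K\in\cT_h$ gives the orthogonality.

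As a by-product, comparing with \eqref{Galerkin} shows that $\Pi_hu-P_hu\in V_h$ is $a_h$-orthogonal to all of $V_h$, hence $\Pi_hu=P_hu$ by (H1); in any case Lemma \ref{InterpolationErrorEQ} applies, and summing its local bounds over the mesh (and estimating the broken $H^{1+\mathcal{S}}$-seminorm by the global one) yields $\|\rho^{1/2}(\Pi_hu-u)\|_{L^2(\Om)}\lesssim h^{1+\mathcal{S}}|u|_{H^{1+\mathcal{S}}(\Om)}$. This is the third display in \eqref{estimateH2} with $\triangle\mathcal{S}=1$, and it also makes $\|\rho^{1/2}(u-\Pi_hu)\|_{L^2(\Om)}^2$ of order $h^{2+2\mathcal{S}}$.

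For the middle display I would expand
\begin{equation*}
\|\rho^{1/2}u\|_{L^2(\Om)}^2-\|\rho^{1/2}\Pi_hu\|_{L^2(\Om)}^2=2(\rho(u-\Pi_hu),u)_{L^2(\Om)}-\|\rho^{1/2}(u-\Pi_hu)\|_{L^2(\Om)}^2 ,
\end{equation*}
the last term being negligible by the previous paragraph, and estimate the cross term by using the cancellation $\int_K(u-\Pi_hu)\dx=0$ once more: on each $K$ one may subtract from $\rho u$ its element mean, so that (by the Poincar\'e inequality and Lemma \ref{InterpolationErrorEQ})
\begin{equation*}
(\rho(u-\Pi_hu),u)_{L^2(K)}\lesssim h_K\,\|\na(\rho u)\|_{L^2(K)}\,\|u-\Pi_hu\|_{L^2(K)}\lesssim h_K^{2+\mathcal{S}}\,\|\na(\rho u)\|_{L^2(K)}\,|u|_{H^{1+\mathcal{S}}(K)} ,
\end{equation*}
and a Cauchy--Schwarz sum over $K\in\cT_h$ gives $(\rho(u-\Pi_hu),u)_{L^2(\Om)}\lesssim h^{2+\mathcal{S}}$, under the mild regularity of $\rho$ implicitly assumed throughout (when $\rho$ is constant one subtracts instead the mean of $u$ and needs nothing beyond $u\in H^1$). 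Consequently the middle display is $\lesssim h^{2+\mathcal{S}}=h^{2s+\triangle s}$ with $\triangle s=2+\mathcal{S}-2s>0$, so (H4) holds. I expect the only genuinely delicate step to be this last estimate: one must recover a full order $h^{2+\mathcal{S}}$, strictly higher than the $h^{2s}$ appearing in the saturation hypothesis of Theorem \ref{main}, and it is precisely the element-mean cancellation in \eqref{InterpolationEQ}---equivalently, the surplus of local degrees of freedom in $Q_{EQ}(K)$---that makes it available; everything else is bookkeeping with Lemma \ref{InterpolationErrorEQ} and the structure of $Q_{EQ}(K)$.
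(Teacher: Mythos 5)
Your proof is correct and follows essentially the same route as the paper: the $a_h$-orthogonality of $u-\Pi_h u$ to $V_h$ comes from the face- and element-mean matching in \eqref{InterpolationEQ} (the paper packages your elementwise integration by parts as the identity $(\na_h\Pi_h u)|_K=P_K(\na u|_K)$ with $Q_K$ the local gradient space), and the middle estimate in (H4) is obtained in both cases from the element-mean cancellation plus a Poincar\'e argument, under the paper's assumption that $\rho$ is piecewise constant. The exponents you obtain ($\triangle\mathcal{S}=1$ and $2s+\triangle s=2+\mathcal{S}$) coincide with those recorded at the end of the paper's proof.
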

\begin{proof} We define the space $
Q_K=\left (
\begin{array}{c}
a_{11}+a_{12} x_1\\
a_{21}+a_{22} x_2\\
\cdots \\
a_{n1}+a_{n2}x_n
\end{array}
\right ) $  with free parameters $a_{11}$, $a_{12}, \cdots, a_{n1},
a_{n2}$. From the definition of the operator $\Pi_h$,  we have
\begin{equation}
(\na (u-\Pi_hu),\bpsi)_{L^2(K)}=0,  \text{ for any } \bpsi\in Q_K.
\end{equation}
 Let $\nabla_h$ be the piecewise gradient operator which is defined element by element. Since $\na_h
\Pi_h u|_K\in Q_K$,  this leads to
\begin{equation}
(\na_h\Pi_h u)|_K=P_K(\na u|_K),
\end{equation}
with  the $L^2$ projection operator $P_K$ from $L^2(K)$ onto $Q_K$.
  This proves $a_h(u-\Pi_hu, u_h)=0$. It remains to show
  estimates in (H4).  To this end, let $\Pi^0$ be the piecewise
   constant projection operator (defined by $\Pi^0|_K=\Pi_K^0$ for element $K$ ). Without loss of generality, we assume that $\rho$
   is piecewise constant.   It follows from the definition
    of the interpolation operator $\Pi_h$ that
\begin{equation}
\begin{split}
&\|\rho^{1/2}\Pi_hu\|_{L^2(\Om)}^2-\|\rho^{1/2}u\|_{L^2(\Om)}^2=(\rho(\Pi_hu-u), \Pi_hu+u)_{L^2(\Om)}\\
&=(\rho(\Pi_hu-u), \Pi_hu+u-\Pi^0(\Pi_hu+u))_{L^2(\Om)}\\
&\lesssim h\|\rho^{1/2}(\Pi_hu-u)\|_{L^2(\Om)}\|\na_h(\Pi_hu+u)\|_{L^2(\Om)},
\end{split}
\end{equation}
which completes the proof of (H4) with $s=\triangle s=\mathcal{S}=\triangle\mathcal{S}=1$ for the case $u\in H_D^1(\Om)\cap
H^2(\Om)$; with  $s=\mathcal{S}=\mathfrak{s}$, $\triangle s=2-\mathfrak{s}$, and  $\triangle\mathcal{S}=1$,
 for the case $u\in H_D^1(\Om)\cap H^{1+\mathfrak{s}}(\Om)$ with $0<\mathfrak{s}<1$.
\end{proof}

In appendixes  A and B, we  show that $h \lesssim
\|u-u_h\|_h$ when $u\in H^{2}(\Om)$ and that there exist meshes such
that $h^{\mathfrak{s}}\lesssim \|u-u_h\|_h$ holds when $u\in H^{1+\mathfrak{s}}(\Om)$ with
$0<\mathfrak{s}< 1$.  Therefore,  we have that the result in Theorem \ref{main}
holds for this class of elements.

\section{Morley-Wang-Xu elements for $2m$-th order operators}
This section studies  $2m$-th order elliptic eigenvalue problems
defined over the bounded domain $\Om\subset\R^n$ with $1< n$ and
$m\leq n$. Let $\kappa=(\kappa_1,\cdots, \kappa_n)$ be the
multi-index with $|\kappa|=\sum\limits_{i=1}^n\kappa_i$, we define
the space
\begin{equation}
V:=\{v\in L^2(\Om), \frac{\pa^{\kappa}v}{\pa x^{\kappa}}\in
L^2(\Om),|\kappa |\leq m, v|_{\pa\Om}=\frac{\pa^{\ell}
v}{\pa\nu^{\ell}}|_{\pa\Om}=0, \ell=1,\cdots, m-1\},
\end{equation}
with $\nu$ the unit normal vector to $\pa \Om$. The partial derivatives
$\frac{\pa^{\kappa}v}{\pa x^{\kappa}}$ are  defined as
\begin{equation}
\frac{\pa^{\kappa}v}{\pa x^{\kappa}}:= \frac{\pa^{|\kappa|}v}{\pa
x_1^{\kappa_1}\cdots\pa x_n^{\kappa_n}}\,.
\end{equation}

Let $D^{\ell}v$ denote the $m$-th order tensor of all $\ell$-th
order derivatives of $v$, for instance, $\ell=1$ the gradient,  and
$\ell=2$ the Hessian matrix.   Let $\cC$ be a positive definite
operator with the same symmetry as $D^mv$,  the bilinear form
$a(u,v)$ reads
\begin{equation}
a(u,v):=(\sigma, D^{m}v)_{L^2(\Om)} \text{ and }
\sigma:=\mathcal{C}D^mu,
\end{equation}
which gives rise to  the energy norm
\begin{equation}
\|u\|_{V}^2:=a(u, u) \text{ for any }
 u\in V\,,
\end{equation}
which is equivalent to the usual  $|u|_{H^m(\Om)}$ norm  for any
$u\in V$.

$2m$-th order elliptic eigenvalue problems read: Find $(\lam,
u)\in\R\x V$ with
\begin{equation}\label{eigen2m}
\begin{split}
a(u, v)&=\lam (\rho u,v)_{L^2(\Om)} \text{ for any } v\in V \text{
and }  \|\rho^{1/2}u\|_{L^2(\Om)}=1,
\end{split}
\end{equation}
with  some  positive function $\rho\in L^{\infty}(\Om)$.

Consider  Morley-Wang-Xu elements in \cite{WX06} and apply them
to eigenvalue problems under consideration.  Let $\cT_h$ be some
shape regular decomposition into $n$-simplex of the domain $\Om$.
Denote by $\mathcal{H}_{n-i,h}$, $i=1,\cdots, n$, all $n$-$i$
dimensional subsimplexes of $\cT_h$ with $\nu_{n-i,f}$ any one of  unit
normal vectors to $f\in \mathcal{H}_{n-i,h}$. Let $[\cdot]$ denote
the jump of piecewise functions over $f$.  For any $n$-$i$ dimensional  boundary
sub-simplex $f$, the jump $[\cdot]$  denotes the trace restricted to
$f$.
 As usual, $h_K$ is the diameter of $K\in\cT_h$, and $h_f$
the diameter of $f\in\mathcal{H}_{n-i,h}$. Given $K\in\cT_h$, let
$\pa K$ denote the  boundary of $K$.
   Morley-Wang-Xu element spaces  are defined in \cite{WX06},
  which read
\begin{equation}\label{Morleyelement}
V_{h}:=\{v\in L^2(\Om), v|_{K}\in P_m(K), \int_{f}\big[\frac{\pa^{m-i}
v}{\pa \nu_{n-i,f}^{m-i}}\big]df=0, \forall f\in \mathcal{H}_{n-i,h},
 i=1, \cdots, m\}.
\end{equation}

Define the discrete stress $\sigma_h=\cC D_h^m u_h$, the broken
versions  $a_h(\cdot,\cdot)$ and $\|\cdot\|_{\cC_h}$ follow, respectively,
\begin{equation*}
\begin{split}
a_h(u_h, v_h):&=(\sigma_h,D_h^mv_h)_{L^2(\Om)},
\text{ for any }  u_h\,, v_h\in  V+V_{h}\,,\\[0.5ex]
\|u_h\|_{h}^2:&=a_h(u_h, u_h) \text{ for any }
 u_h\in V+V_{h}\,,
\end{split}
\end{equation*}
where $D_h^m$ is  defined elementwise with respect to the
partition $\cT_h$.

  The discrete eigenvalue  problem  reads:
Find $(\lambda_h, u_{h})\in \R\x V_{h}$, such that
\begin{equation}\label{Discreteeigen2m}
\begin{split}
a_h(u_h,  v_h)=\lam_h(\rho u_h,v_h)_{L^2(\Om)}\text{ for any }
v_h\in V_{h}
 \text{ and } \|\rho^{1/2}u_h\|_{L^2(\Om)}=1.
\end{split}
\end{equation}
The canonical  interpolation operator for the spaces $V_{h}$ is  defined
by: Given any $v\in V$, the interpolation $\Pi_h v\in V_{h}$ is
defined as
\begin{equation}\label{Interpolation2m-th}
\int_f \frac{\pa^{m-i} \Pi_hv}{\pa \nu_{n-i,f}^{m-i}}
df=\int_f\frac{\pa^{m-i} v}{\pa \nu_{n-i,f}^{m-i}} df, \text{ for
any } f\in\mathcal{H}_{n-i,h}\,, i=1\,,\cdots\,,m\,.
\end{equation}
For this interpolation, we have the following approximation
\begin{equation}\label{InterpolationError2m}
\|\rho^{1/2}(u-\Pi_hu)\|_{L^2(\Om)}\lesssim
h^{m+\mathfrak{s}}|u|_{H^{m+\mathfrak{s}}(\Om)}\text{ for any }u\in V\cap H^{m+\mathfrak{s}}(\Om)
\text{ with } 0<\mathfrak{s}\leq 1\,.
\end{equation}
 It is straightforward to see that conditions  (H1)-(H3) hold for this class of elements, see \cite{ShiWang10,WX06}.
Then, it follows from Theorems \ref{L2}, \ref{eigvalueerrorab},
and \ref{Energynormab} that
\begin{equation}\label{FiniteElementError2m}
\|u-u_h\|_h\lesssim h^{\mathfrak{s}} \text{ and } \|u-u_h\|_{L^2(\Om)}\lesssim
h^{2\mathfrak{s}}\,,
\end{equation}
provided that eigenfunctions $u\in V\cap H^{m+\mathfrak{s}}(\Om)$ with
$0<\mathfrak{s}\leq 1$.

\begin{theorem}\label{Theorem7.1}  Let $(\lam,u)$ and $(\lam_{h}, u_{h})$
be  solutions of problems  \eqref{eigen2m} and
\eqref{Discreteeigen2m}, respectively.  Then,
\begin{equation}
\lam_h\leq\lam,
\end{equation}
provided that $h$ is small enough.
\end{theorem}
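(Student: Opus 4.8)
The plan is to deduce the claim from the abstract result in Theorem \ref{main}, i.e.\ to verify, for the Morley-Wang-Xu spaces $V_h$ of \eqref{Morleyelement} and the piecewise bilinear form $a_h(\cdot,\cdot)$, the conditions (H1)--(H4) together with the saturation condition $h^{2s}\lesssim\|u-u_h\|_h^2$. Conditions (H1)--(H3) are already available for this family (see \cite{ShiWang10,WX06}) and yield the rates \eqref{FiniteElementError2m}; the saturation condition $h^{2\mathfrak s}\lesssim\|u-u_h\|_h^2$ for eigenfunctions $u\in V\cap H^{m+\mathfrak s}(\Om)$ in the admissible range of $\mathfrak s$ is proved in the appendix. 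Hence the substantive step is (H4), which I will verify with $\Pi_h$ taken to be the canonical interpolation operator \eqref{Interpolation2m-th} and, for clarity of exposition, for the model case in which $\cC$ is piecewise constant on $\cT_h$, the general case following by a standard reduction in which the oscillatory part of $\cC$ contributes only a higher-order term (as for $\rho$ in the proof of Lemma \ref{EnrichedNQ1}).

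The engine behind (H4) is the commuting property of the Morley-Wang-Xu interpolation: $\int_K D^m(u-\Pi_h u)\dx=0$ for every $K\in\cT_h$, equivalently $D_h^m\Pi_h u=\Pi^0(D^m u)$ with $\Pi^0$ the elementwise $L^2$-projection onto piecewise constant tensors. This is exactly the identity underlying the construction of \cite{WX06}: integrating $\int_K\pa^{\kappa}(u-\Pi_h u)\dx$ with $|\kappa|=m$ by parts $m$ times reduces it to a sum of face integrals of normal derivatives of order $m-i$ over the sub-simplexes $f\in\mathcal{H}_{n-i,h}$, each of which vanishes by the defining conditions \eqref{Interpolation2m-th}. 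Since $v_h|_K\in P_m(K)$ forces $D^m v_h$ to be piecewise constant, the symmetry of $\cC$ and its being piecewise constant give
\[
a_h(u-\Pi_h u,v_h)=\sum_{K\in\cT_h}\bigl(D^m u-\Pi^0(D^m u),\,\cC D^m v_h\bigr)_{L^2(K)}=0\qquad\text{for all }v_h\in V_h .
\]
In particular $a_h(u-\Pi_h u,u_h)=0$, the first requirement of (H4); incidentally this forces $\Pi_h u=P_h u$, so the Galerkin-projection estimates of Section~3 apply to $\Pi_h u$ as well.

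The remaining two estimates of (H4) are then short. The third is immediate from \eqref{InterpolationError2m}, namely $\|\rho^{1/2}(\Pi_h u-u)\|_{L^2(\Om)}\lesssim h^{m+\mathfrak s}|u|_{H^{m+\mathfrak s}(\Om)}$, so (H4) holds with $\mathcal S=\mathfrak s$ and $\triangle\mathcal S=m>0$. The second follows from the triangle inequality for $\|\rho^{1/2}\cdot\|_{L^2(\Om)}$ together with $\|\rho^{1/2}u\|_{L^2(\Om)}=1$:
\[
\bigl|\,\|\rho^{1/2}\Pi_h u\|_{L^2(\Om)}-1\,\bigr|\le\|\rho^{1/2}(\Pi_h u-u)\|_{L^2(\Om)}\lesssim h^{m+\mathfrak s},
\]
whence $\|\rho^{1/2}u\|_{L^2(\Om)}^2-\|\rho^{1/2}\Pi_h u\|_{L^2(\Om)}^2\lesssim h^{m+\mathfrak s}$, i.e.\ the second estimate of (H4) with $s=\mathfrak s$ and $\triangle s=m-\mathfrak s$. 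This exponent $\triangle s$ is positive whenever $\mathfrak s<m$, which is automatic for $m\ge 2$ and holds for $0<\mathfrak s<1$ when $m=1$ (the Crouzeix-Raviart case) — precisely the range in which the saturation condition is available. Collecting (H1)--(H4) and $h^{2\mathfrak s}\lesssim\|u-u_h\|_h^2$, Theorem \ref{main} applies with $s=\mathcal S=\mathfrak s$ and yields $\lam_h\le\lam$ for $h$ small enough.

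The steps that carry the real weight are the commuting identity — whose proof is the hierarchical integration by parts over the nested sub-simplexes that is the core of the Morley-Wang-Xu design — and, through the appendix, the saturation condition $h^{\mathfrak s}\lesssim\|u-u_h\|_h$. The latter is not a formality: it is the quantitative form of ``local approximation better than global continuity'' on which the whole method rests, and it is what fixes the admissible range of $\mathfrak s$; in the second-order case $m=1$ the fully smooth case $\mathfrak s=1$ is genuinely borderline, in agreement with the restriction $0<r<1$ of \cite{AD04}. A secondary technical point is the treatment of a variable $\cC$ in the \emph{exact} orthogonality $a_h(u-\Pi_h u,u_h)=0$, which requires correcting $\Pi_h u$ within $V_h$ so that the residual stays of strictly higher order.
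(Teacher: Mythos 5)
Your proposal is correct and follows essentially the same route as the paper: take $\Pi_h$ to be the canonical Morley--Wang--Xu interpolation \eqref{Interpolation2m-th}, obtain $a_h(u-\Pi_hu,v_h)=0$ from its commuting/mean-value property against the piecewise constant $D^m_hv_h$, read off the remaining estimates of (H4) from \eqref{InterpolationError2m}, and combine with the saturation condition of the appendices to invoke Theorem \ref{main} (effectively for $m\ge 2$, since $\triangle s=m-\mathfrak{s}>0$ fails only in the smooth $m=1$ case). You are in fact somewhat more explicit than the paper on two points it leaves implicit --- the hierarchical integration by parts behind $D_h^m\Pi_hu=\Pi^0D^mu$ and the reduction to piecewise constant $\cC$ for the exact orthogonality --- but these are refinements of, not departures from, the paper's argument.
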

\begin{proof} The definition of  $\Pi_h$ in
\eqref{Interpolation2m-th} yields  $a_h(u-\Pi_hu, v_h)=0$ for any
$v_h\in V_h$.  The condition (H4) follows immediately from
  \eqref{InterpolationError2m}.  In addition, in appendixes A and B, we show that $h\lesssim
|u-u_{h}|_h$  when $u \in
 V\cap H^{m+1}(\Om)$ and that  there exist
meshes such that $h^{\mathfrak{s}}\lesssim \|u-u_h\|_h$ holds when $u\in
H^{m+\mathfrak{s}}(\Om)$ with $0<\mathfrak{s}< 1$. Then, the desired result follows from
Theorem \ref{main} for  $m\geq 2$.
\end{proof}

\section{New nonconforming elements}
In this section, we shall follow the condition (H4) and the
saturation condition in Theorem \ref{main} to propose two  new
nonconforming finite elements for
 second order operators.   This is of two fold, one is to modify
 a nonconforming element in  literature such that the
 modified  one will meet the condition (H4),  the other is to
 construct a new nonconforming element.

\subsection{The enriched Crouzeix-Raviart element}  To fix the idea, we only  consider
 the case where $n=2$ and note that the results can be generalized to any dimension.
 Let $\cT_h$ be some shape regular decomposition into
 triangles of the polygonal domain $\Om\subset\R^2$. Here we  restrict ourselves to the case where the
bilinear form $a(u,v)=(\na u, \na v)_{L^2(\Om)}$ with the mixed
boundary condition  $|\Gamma_N|\not=0$.

Note that the original Crouzeix-Raviart element can only guarantee
theoretically   lower bounds of eigenvalues for the singular
case in the sense that $u\in H^{1+\mathfrak{s}}(\Omega)$ with $0<\mathfrak{s}<1$.  To
produce  lower bounds of  eigenvalues for both the singular case
$u\in H^{1+\mathfrak{s}}(\Omega)$  and the smooth case $u\in H^2(\Omega)$, we
propose to enrich the shape function space by $x_1^2+x_2^2$ on each
element. This leads to the following shape function space
 \begin{equation}\label{nonconformingECR}
Q_{ECR}(K)=P_1(K)+\sspan\{x_1^2+x_2^2\}\quad \text{ for any
}K\in\cT_h.
 \end{equation}
  The enriched
 Crouzeix-Raviart element space $V_h$ is then defined by
\begin{equation}\label{ECR}
 V_h:=\begin{array}[t]{l}\big\{v\in L^2(\Om):
 v|_{K}\in Q_{ECR}(K) \text{ for each }K\in \cT_h,
 \int_f[v]df=0,\\
 \text{ for all internal edges~} f\,,
 \text{ and }\int_f vdf=0 \text{ for all edges  $f$ on  } \Gamma_D
 \big\}\,.
 \end{array}\nn
\end{equation}
For the enriched  Crouzeix-Raviart  element, we define the
interpolation operator  $\Pi_{h}:H_D^{1}(\Om )\rightarrow V_h$ by
\begin{equation}\label{InterpolationECR}
\begin{split}
&\int_{f} \Pi_{h} vdf =\int_{f}v df \text{ for any } v \in
H_D^{1}(\Om )
\text{ for any edge $f$ },\\
&\int_K \Pi_{h} v dx =\int_K  v dx  \text{ for any $K\in \cT_h$}.
\end{split}
\end{equation}
For this interpolation operator, a similar argument of Lemma
\ref{InterpolationErrorEQ} leads to:
\begin{lemma}\label{InterpolationErrorECR}  There holds that
\begin{equation}
\|u-\Pi_h u\|_{L^2(K)}\lesssim h^{2}|u|_{H^2(K)} \text{for any }
u\in H^2(K) \text{ and } K\in \cT_h,
\end{equation}
\begin{equation}
\|u-\Pi_h u\|_{L^2(K)}\lesssim h^{1+s}|u|_{H^{1+\mathfrak{s}}(K)}\text{for any
} u\in H^{1+\mathfrak{s}}(K) \text{ with } 0<\mathfrak{s}<1 \text{ and } K\in \cT_h.
\end{equation}
\end{lemma}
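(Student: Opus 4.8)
The plan is to mimic the proof of Lemma~\ref{InterpolationErrorEQ} almost verbatim; the only genuinely new point is to first confirm that the degrees of freedom in \eqref{InterpolationECR} are unisolvent on $Q_{ECR}(K)$. Since $\dim Q_{ECR}(K)=4$ matches the number of functionals (three edge averages and one element average), unisolvence amounts to showing that any $v=a+bx_1+cx_2+d(x_1^2+x_2^2)\in Q_{ECR}(K)$ with vanishing mean on each edge of $K$ and vanishing mean over $K$ is zero; this is a $4\times4$ linear system whose nonsingularity I would verify by a direct computation on the reference triangle. Because the functionals in \eqref{InterpolationECR} transform correctly under the affine map $F_K$, the operator $\Pi_h$ is then well defined on $H^1(\Om)$ (the edge averages make sense by the trace theorem), and, since $P_1(K)\subset Q_{ECR}(K)$ and $\Pi_h$ is a projection, $\Pi_h p=p$ for every $p\in P_1(K)$.

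Next I would reproduce the Poincar\'e step. As $u-\Pi_hu$ has vanishing mean on each edge of $K$, the Poincar\'e inequality gives $\|u-\Pi_hu\|_{L^2(K)}\lesssim h_K\|\na(u-\Pi_hu)\|_{L^2(K)}$; this is obtained by a scaling argument from the reference element $\hK$, where $\|\na\hat w\|_{L^2(\hK)}$ together with a single edge average is an equivalent norm on $H^1(\hK)$, so $\na\hat w=0$ and vanishing edge average force $\hat w=0$. To bound the gradient I would use that $\Pi_h$ reproduces $P_1(K)$: for any $p\in P_1(K)$, $\na(u-\Pi_hu)=\na\big((u-p)-\Pi_h(u-p)\big)$, so combining the boundedness of $\Pi_h$ from $H^1(K)$ to $H^1(K)$ (uniform after scaling to $\hK$, via the trace theorem for the edge functionals) with a standard scaling argument yields $\|\na(u-\Pi_hu)\|_{L^2(K)}\lesssim\inf_{p\in P_1(K)}\|u-p\|_{H^1(K)}\lesssim h_K|u|_{H^2(K)}$ by the Bramble--Hilbert lemma. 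Inserting this into the Poincar\'e estimate gives the first bound $\|u-\Pi_hu\|_{L^2(K)}\lesssim h_K^{2}|u|_{H^2(K)}$.

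For the singular case I would argue exactly as at the end of the proof of Lemma~\ref{InterpolationErrorEQ}, through the interpolation space theory. The operator $I-\Pi_h$ is also bounded from $H^1(K)$ to $L^2(K)$ and annihilates $P_0(K)$, which by scaling gives $\|u-\Pi_hu\|_{L^2(K)}\lesssim h_K|u|_{H^1(K)}$; interpolating this with $\|u-\Pi_hu\|_{L^2(K)}\lesssim h_K^{2}|u|_{H^2(K)}$ and using $[H^1(K),H^2(K)]_{\mathfrak{s},2}=H^{1+\mathfrak{s}}(K)$ delivers $\|u-\Pi_hu\|_{L^2(K)}\lesssim h_K^{1+\mathfrak{s}}|u|_{H^{1+\mathfrak{s}}(K)}$ for $0<\mathfrak{s}<1$. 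The only thing requiring care — hence the main, and essentially bookkeeping, obstacle — is tracking the powers of $h_K$ in the real-interpolation step; I would handle this by transplanting everything to $\hK$ via $F_K$, where the interpolation-space norm equivalence holds with a fixed constant, and then rescaling with the correct homogeneity of the fractional seminorm. Apart from this, the element being affine-equivalent, the proof of Lemma~\ref{InterpolationErrorEQ} applies mutatis mutandis and there is no real difficulty.
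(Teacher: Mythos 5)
Your proposal is correct and follows essentially the same route as the paper, whose proof of Lemma~\ref{InterpolationErrorECR} simply invokes ``a similar argument'' to Lemma~\ref{InterpolationErrorEQ}: the Poincar\'e inequality from the vanishing edge means, standard interpolation (Bramble--Hilbert) theory for $u\in H^2(K)$, and interpolation space theory for the fractional case $u\in H^{1+\mathfrak{s}}(K)$. Your explicit verification of unisolvence of the four functionals on $Q_{ECR}(K)$ and the careful $h$-scaling in the real-interpolation step are details the paper leaves implicit, but they do not change the argument.
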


\begin{lemma} For the enriched Crouzeix-Raviart element, it holds the condition (H4).
\end{lemma}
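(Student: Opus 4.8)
The plan is to follow the proof of Lemma~\ref{EnrichedNQ1} for the enriched nonconforming rotated $Q_1$ element, taking $\Pi_h$ to be the interpolation operator of \eqref{InterpolationECR}. First I would record the elementwise gradient space: for $K\in\cT_h$ set $Q_K:=\{\na\phi:\phi\in Q_{ECR}(K)\}=\{\bv+2\gamma\,(x_1,x_2)^T:\bv\in\R^2,\ \gamma\in\R\}$, so that $\na v_h|_K\in Q_K$ for every $v_h\in V_h$. The crux is to establish the exact orthogonality $(\na(u-\Pi_hu),\btau)_{L^2(K)}=0$ for all $\btau\in Q_K$ and all $K\in\cT_h$. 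To do so I would write $\btau=\na\phi$ with $\phi\in Q_{ECR}(K)$ and integrate by parts on $K$: the volume contribution $\int_K(u-\Pi_hu)\,\Delta\phi\,dx$ vanishes since $\Delta\phi$ is a constant on $K$ and, by \eqref{InterpolationECR}, $\Pi_hu$ and $u$ have the same mean over $K$; for the boundary contribution one notes that on each straight edge $f\subset\pa K$ the normal derivative $\pa\phi/\pa\nu=\bv\cdot\nu_f+2\gamma\,(x_1,x_2)\cdot\nu_f$ is \emph{constant} along $f$, so $\int_f(u-\Pi_hu)\,(\pa\phi/\pa\nu)\,ds=(\pa\phi/\pa\nu|_f)\int_f(u-\Pi_hu)\,ds=0$ by the edge degrees of freedom in \eqref{InterpolationECR}. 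Summing over $K$ and using $\na u_h|_K\in Q_K$ then gives $a_h(u-\Pi_hu,u_h)=0$, the first requirement in (H4); moreover the orthogonality identifies $\na_h\Pi_hu$ with the piecewise $L^2$-projection of $\na u$ onto the spaces $Q_K$, whence $\|\na_h\Pi_hu\|_{L^2(\Om)}\le|u|_{H^1(\Om)}$.

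For the two remaining estimates in (H4) I would argue exactly as in Lemma~\ref{EnrichedNQ1}. Assuming without loss of generality that $\rho$ is piecewise constant and letting $\Pi^0$ be the piecewise constant projection, the identity $\int_K(\Pi_hu-u)\,dx=0$ lets one replace $\Pi_hu+u$ by $(I-\Pi^0)(\Pi_hu+u)$ in $\|\rho^{1/2}\Pi_hu\|_{L^2(\Om)}^2-\|\rho^{1/2}u\|_{L^2(\Om)}^2=(\rho(\Pi_hu-u),\Pi_hu+u)_{L^2(\Om)}$, and then the elementwise Poincar\'e inequality bounds this by $h\,\|\rho^{1/2}(\Pi_hu-u)\|_{L^2(\Om)}\,\|\na_h(\Pi_hu+u)\|_{L^2(\Om)}\lesssim h\,\|\rho^{1/2}(\Pi_hu-u)\|_{L^2(\Om)}\,|u|_{H^1(\Om)}$. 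Inserting Lemma~\ref{InterpolationErrorECR}, which yields $\|\rho^{1/2}(\Pi_hu-u)\|_{L^2(\Om)}\lesssim h^2|u|_{H^2(\Om)}$ when $u\in H^2$ and $\lesssim h^{1+\mathfrak{s}}|u|_{H^{1+\mathfrak{s}}(\Om)}$ when $u\in H^{1+\mathfrak{s}}$, $0<\mathfrak{s}<1$, shows the above difference is $\mathcal{O}(h^3)$, resp.\ $\mathcal{O}(h^{2+\mathfrak{s}})$. This gives (H4) with $s=\mathcal{S}=1$ and $\triangle s=\triangle\mathcal{S}=1$ for $u\in H_D^1(\Om)\cap H^2(\Om)$, and with $s=\mathcal{S}=\mathfrak{s}$, $\triangle s=2-\mathfrak{s}$ and $\triangle\mathcal{S}=1$ for $u\in H_D^1(\Om)\cap H^{1+\mathfrak{s}}(\Om)$, $0<\mathfrak{s}<1$.

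The step that needs care, and the only one not already contained in the $EQ^{\rot}_1$ analysis, is the exact orthogonality on the triangular mesh: because the edges are not coordinate-aligned, the argument used for the rectangular element does not transfer verbatim. What makes it work is precisely that the enrichment is $x_1^2+x_2^2$ rather than $x_1^2$ and $x_2^2$ separately, so that the gradient of the enriching function is the radial field $2(x_1,x_2)$, whose normal component $(x_1,x_2)\cdot\nu_f$ is constant on any straight edge $f$ (the edge being orthogonal to $\nu_f$); consequently $\pa\phi/\pa\nu$ is edgewise constant for every $\phi\in Q_{ECR}(K)$, which is exactly what kills the boundary terms. Everything else mirrors the enriched rotated $Q_1$ case, so that, together with the saturation condition established in the appendices and Theorem~\ref{main}, (H4) will in turn give lower bounds for eigenvalues computed with this element.
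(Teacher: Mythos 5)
Your proposal is correct and follows essentially the same route as the paper: the same gradient space $Q_K$, the same integration by parts exploiting that $\div\bpsi$ (equivalently $\Delta\phi$) is constant on $K$ and $\bpsi\cdot\nu_f$ is constant on each straight edge, and then the same mean-value/Poincar\'e argument of Lemma~\ref{EnrichedNQ1} with Lemma~\ref{InterpolationErrorECR} to get the (H4) exponents. The paper phrases the orthogonality directly for $\bpsi\in Q_K$ rather than via $\bpsi=\na\phi$, but this is only a notational difference.
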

\begin{proof} We follow the idea in Lemma \ref{EnrichedNQ1} to define the space $
Q_K=\left (
\begin{array}{c}
a_{11}+a_{12} x_1\\
a_{21}+a_{12} x_2\\
\end{array}
\right ) $ with free parameters $a_{11}$, $a_{21},  a_{12}$. From
the definition of the operator $\Pi_h$,  we have
\begin{equation}\label{localprojection}
(\na (u-\Pi_hu),\bpsi)_{L^2(K)}=0,  \text{ for any } \bpsi\in Q_K.
\end{equation}
Indeed, we integrate by parts to get
\begin{equation*}
(\na (u-\Pi_hu),\bpsi)_{L^2(K)}=-(u-\Pi_hu,
\div\bpsi)_{L^2(K)}+\sum\limits_{f\subset\partial
K}\int_{f}(u-\Pi_hu)\bpsi\cdot\nu_fds.
\end{equation*}
Since $\div\bpsi$ and $\bpsi\cdot\nu_f$ (on each edge $f$) are
constant, then \eqref{localprojection} follows from
\eqref{InterpolationECR}. Since $\na_h \Pi_h u|_K\in Q_K$, the
identity \eqref{localprojection} leads to
\begin{equation}
(\na_h\Pi_h u)|_K=P_K(\na u|_K),
\end{equation}
with  the $L^2$ projection operator $P_K$ from $L^2(K)$ onto $Q_K$.
Then a similar argument of Lemma \ref{EnrichedNQ1} completes the proof.
\end{proof}

In the appendixes A and C,   we have proven that $h \lesssim
\|u-u_h\|_h$ when $u\in H^{2}(\Om)$ and that there exist meshes such
that $h^{\mathfrak{s}}\lesssim \|u-u_h\|_h$ holds when $u\in H^{1+\mathfrak{s}}(\Om)$ with
$0<\mathfrak{s}< 1$.  Hence,  the result in Theorem \ref{main}
holds for this class of elements.

\subsection{A new first order nonconforming element}
With the condition from Theorem \ref{main}, a systematic method
obtaining the lower bounds for eigenvalues  is to design
nonconforming finite element spaces with good local approximation
property but not so good  global continuity property.
 To make the idea clearer, we propose a new nonconforming  element that admits lower bounds
 for  eigenvalues.  Let $\cT_h$ be some shape regular decomposition into
 triangles of the polygonal domain $\Om\subset\R^2$.  We define
\begin{equation}\label{EXEX}
 V_h:=\begin{array}[t]{l}\big\{v\in L^2(\Om):
 v|_{K}\in P_{2}(K) \text{ for each }K\in \cT_h,
 \int_f[v]df=0,\\
 \text{ for all internal edges~} f\,,
 \text{ and }\int_f vdf=0 \text{ for all edges  $f$ on  } \Gamma_D
 \big\}\,.
 \end{array}\nn
\end{equation}
Since the conforming quadratic element space on the triangle mesh is a subspace of $V_h$, the usual dual argument proves
$$
\|u-P_hu\|_{L^2(\Omega)}\lesssim h^{2+\mathfrak{s}}|u|_{H^{2+\mathfrak{s}}(\Omega)},
$$
provided that $u\in V\cap H^{2+\mathfrak{s}}(\Om)$ with $0<\mathfrak{s}\leq 1$.  In the appendix A, it is shown that
$h\lesssim \|\nabla_h(u-u_h)\|_{L^2(\Omega)}$, which in fact implies the condition (H4)  for this case.
For the singular case $u\in V\cap H^{1+\mathfrak{s}}(\Om)$, a similar argument of the enriched Crouzeix-Raviart
 element   is able to show the condition (H4).

\section{Conclusion and comments}
In this paper, we propose a systematic method that can produce
lower bounds for  eigenvalues of elliptic operators.  With this
method, to obtain  lower bounds is to design nonconforming
finite element spaces with  enough local degrees of freedom when
compared to the global continuity.  We check that several
nonconforming methods in literature possess  this promising
property.  We also propose some new nonconforming methods with this
feature.   In addition, we  study systematically the saturation
condition for both  conforming and nonconforming finite element
methods.

Certainly,  there are many other nonconforming finite elements which are not analyzed herein.
Let mention several more elements and give some short comments on applications of the theory herein to them.
The first one is  the nonconforming rotated $Q_1$ element from \cite{Rannacher92}. For this element, discrete
 eigenvalues are smaller than exact ones when eigenfunctions are singular, see more details from  \cite{YangZhangLin10}.
 The same comments applies for the Crouzeix--Raviart element of \cite{CroRav73}, see more details from \cite{AD04,YangLinZhang09}.
The last one is the Adini element \cite{Lascaux85,ShiWang10} for fourth order problems.
  For this element,  by an  expansion result of \cite[Lemma]{HuShi2012} and a  similar
identity like that of  Lemma 4.1 therein, a similar argument for the Wilson element is able to show that
discrete eigenvalues are smaller than exact ones provided that eigenfunctions $u\in H^4(\Omega)$.

\appendix

\section{The saturation condition}
In the following two sections, we shall prove, for some cases, the
saturation condition which is used in Theorem \ref{main}. The error
basically consists of two parts: approximation errors and the
consistency errors. In this section, we analyze the case where
approximation errors are dominant and the case where consistency errors are dominant; in the 
appendix B, we give some comments for the case  where
eigenfunctions are singular.

\subsection{The saturation condition  where  approximation error are dominant}  Let  $u\in V\cap H^m(\Om)$ be
eigenfunctions of some $2m$-th order elliptic operator.  Let $V_h$ be
some $k$-th order conforming or nonconforming  approximation spaces
to $H^m(\Om)$ over the mesh $\cT_h$ in the following sense:
\begin{equation}
\frac{\sup\limits_{0\not= v\in H^{m+k}(\Om)\cap V}\inf\limits_{v_h\in
V_h}\|D_h^{m}(v-v_h)\|_{L^2(\Om)}}{|v|_{H^{m+k}}}\lesssim h^k\text{ for
some positive integer } k.
\end{equation}
Then the following  condition is sufficient for the saturation
condition:
   \begin{enumerate}
    \item[H5] At least one fixed component of $D_h^{m+k}v_h$ vanishes for
               all $v_h\in V_h$  while  the $L^2$ norm of  the same component of $D^{m+k}u$
                is  nonzero.
  \end{enumerate}

Recall that $D^{\ell}v$ denote the $\ell$-th order tensor of all
$\ell$-th order derivatives of $v$, for instance, $\ell=1$ the
gradient,  and $\ell=2$ the Hessian matrix, and that $D_h^{\ell}$ are the
piecewise  counterparts of $D^{\ell}$ defined element by element.

In order to  achieve the desired result, we shall use the operator defined in \eqref{QuasiInterpolation}.  For readers'
 convenience, we recall its definition.
Given any  element $K$,  define $J_{m+k, K}v\in P_{m+k}(K)$ by
\begin{equation}\label{QuasiInterpolation1}
\int_{K}D^{\ell}J_{m+k,K}v dxdy=\int_K D^{\ell}v dxdy, \ell=0, 1, \cdots, m+k,
\end{equation}
for any $v\in H^{m+k}(K)$.  Note that the operator $J_{m+k,K}$ is
well-defined. Since $\int_K D^\ell(v-J_{m+k,K}v)dxdy=0$, $\ell=0, \cdots, m+k$,
\begin{equation}\label{QuasiInterpolation4}
\|D^{\ell_1}(v-J_{m+k,K}v)\|_{L^2(K)}\leq Ch_K^{\ell_2-\ell_1}\|D^{\ell_2}(v-J_{m+k,K}v)\|_{L^2(K)}\text{ for any } 0\leq\ell_1\leq\ell_2\leq m+k.
\end{equation}
 Finally,  define the global operator $J_{m+k}$ by
\begin{equation}\label{QuasiInterpolation2}
J_{m+k}|_K=J_{m+k,K} \text{ for any }K\in\cT_h.
\end{equation}
 It follows from the very definition of $J_{m+k,K}$ in
\eqref{QuasiInterpolation1} that
\begin{equation}
D^{m+k}_hJ_{m+k} v=\Pi^0D^{m+k}v,
\end{equation}
where $\Pi^0$ is  the $L^2$ piecewise constant projection operator with respect to $\cT_h$, which is defined in subsection \ref{sub5.1}.   Since piecewise constant functions are dense
 in the space $L^2(\Om)$,
 \begin{equation}\label{QuasiInterpolation3}
 \|D_h^{m+k}(v-J_{m+k} v)\|_{L^2(\Om)}\rightarrow 0 \text{ when }h\rightarrow 0.
 \end{equation}

\begin{theorem}  Under  the condition H5, there holds the following
saturation condition:
\begin{equation}
h^{k}\lesssim \|D_h^m(u-u_h)\|_{L^2(\Om)}.
\end{equation}
\end{theorem}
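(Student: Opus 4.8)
The plan is to show that the error $\|D_h^m(u-u_h)\|_{L^2(\Om)}$ cannot decay faster than $h^k$ by extracting a single scalar component of the $(m{+}k)$-th derivative that every discrete function kills but that $u$ does not. Write $e_h := u - u_h$. First I would localize the argument to one fixed multi-index $\alpha$ with $|\alpha| = m+k$ that is singled out by the condition H5: by hypothesis $\partial^\alpha v_h \equiv 0$ on every $K$ for all $v_h \in V_h$ (so in particular $\partial^\alpha u_h = 0$ elementwise), while $c_\alpha := \|\partial^\alpha u\|_{L^2(\Om)} > 0$. The strategy is to compare $u$ on each element against the polynomial $J_{m+k}u$ from \eqref{QuasiInterpolation1}, whose $(m{+}k)$-th derivatives are the elementwise $L^2$-constant projections of those of $u$; this replaces "$u$" by a piecewise polynomial of degree $m+k$ for which clean element-level identities hold.

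The key steps, in order. \textbf{Step 1 (reduce to a polynomial model).} Using \eqref{QuasiInterpolation3}, $\|D_h^{m+k}(u - J_{m+k}u)\|_{L^2(\Om)} \to 0$, and by \eqref{QuasiInterpolation4} this controls all lower-order elementwise seminorms of $u - J_{m+k}u$ up to powers of $h$; in particular, on each $K$, $\|D^{m}(u - J_{m+k}u)\|_{L^2(K)} \lesssim h_K^{k}\, o(1)$ in a suitable averaged sense, so it suffices to bound $\|D_h^m(J_{m+k}u - u_h)\|_{L^2(\Om)}$ from below by $h^k$ times a positive constant (the crossing error $J_{m+k}u - u$ being of order $h^k \cdot o(1)$, hence absorbable). \textbf{Step 2 (Taylor / scaling identity on each element).} On a fixed $K$, write $w := J_{m+k}u \in P_{m+k}(K)$; its degree-$(m{+}k)$ part has $\partial^\alpha w = \Pi_K^0 \partial^\alpha u =: \gamma_K$, a constant. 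Expressing $w$ in scaled coordinates, the contribution of the $\partial^\alpha$-monomial to $D^m w$ is a polynomial of degree $k$ with coefficient $\gamma_K$ (up to a fixed combinatorial factor), and this monomial's $D^m$-image is $L^2(K)$-orthogonal, or at least linearly independent modulo lower-degree terms, to anything $D^m u_h$ can produce on $K$ — because $\partial^\alpha u_h = 0$ forces $D^m u_h$ to miss exactly that degree-$k$ direction. A standard scaling/equivalence-of-norms argument on the reference element then gives $\|D^m(w - u_h)\|_{L^2(K)}^2 \gtrsim h_K^{2k}\, |\gamma_K|^2 |K|$ (the $h_K^{2k}$ coming from differentiating a degree-$(m{+}k)$ polynomial $m$ times and the Jacobian/scaling of $|K|$). \textbf{Step 3 (sum over elements).} Summing, $\|D_h^m(w - u_h)\|_{L^2(\Om)}^2 \gtrsim \sum_K h_K^{2k} |\gamma_K|^2 |K| \gtrsim h^{2k} \sum_K \|\Pi_K^0 \partial^\alpha u\|_{L^2(K)}^2 = h^{2k} \|\Pi^0 \partial^\alpha u\|_{L^2(\Om)}^2$, and since $\Pi^0 \partial^\alpha u \to \partial^\alpha u$ in $L^2(\Om)$ with $\|\partial^\alpha u\|_{L^2(\Om)} = c_\alpha > 0$, for $h$ small this is $\gtrsim h^{2k} c_\alpha^2/2$. \textbf{Step 4 (triangle inequality).} Combine Steps 1 and 3: $\|D_h^m(u - u_h)\|_{L^2(\Om)} \ge \|D_h^m(w - u_h)\|_{L^2(\Om)} - \|D_h^m(u - w)\|_{L^2(\Om)} \gtrsim h^k c_\alpha - o(h^k) \gtrsim h^k$.

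The main obstacle I expect is Step 2 — making precise the claim that the single component $\partial^\alpha u_h = 0$ genuinely obstructs a full order of approximation in the $D^m$-norm, rather than merely in the full $D^{m+k}$-norm. The subtlety is that $D^m u_h$ still has many nonzero components, so one must argue that the $L^2(K)$-distance from the specific degree-$k$ polynomial vector $D^m(\text{monomial }x^\alpha)$ to the span of everything $D^m$ of a $V_h$-function can produce is bounded below uniformly after scaling to $\hat K$; this is a finite-dimensional, mesh-independent linear-algebra fact on the reference element, but stating it cleanly (and checking the relevant subspace of polynomial vector fields really does exclude that direction, using only that one scalar partial derivative is killed) requires care. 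A secondary technical point is handling non-uniform meshes in Step 3: one wants $h_K^{2k}|\gamma_K|^2|K|$ summed to dominate $h^{2k}$ times a fixed positive quantity, which is immediate if $\|\partial^\alpha u\|_{L^2(\Om)}>0$ forces a fixed fraction of the mass of $\partial^\alpha u$ to sit on elements with $h_K \gtrsim h$, or more simply by just keeping the sum $\sum_K h_K^{2k}\|\Pi_K^0\partial^\alpha u\|_{L^2(K)}^2$ and noting each $h_K \le h$ works in the wrong direction — so one instead argues with $\min_K h_K$ replaced by a shape-regularity comparison, or restricts attention to the definition of $h$ as $\max h_{x_i}$ which the earlier sections tie to all $h_K$ being comparable on the (quasi-uniform) meshes where the saturation condition is invoked.
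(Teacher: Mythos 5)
Your proposal is essentially the paper's own argument: the same projection $J_{m+k}$ from \eqref{QuasiInterpolation1}, the same use of the component singled out by H5, the density of piecewise constants \eqref{QuasiInterpolation3}, and a triangle-inequality absorption, merely organized as an elementwise lower bound rather than the paper's upper bound on $\sum_{\kappa}\|\partial^{\kappa}u\|_{L^2(\Om)}^2$. The Step 2 ``obstacle'' you flag dissolves without any reference-element orthogonality argument: since $\partial^{\alpha}u_h\equiv 0$ elementwise, $\partial^{\alpha}(J_{m+k}u-u_h)|_K=\Pi_K^0\partial^{\alpha}u=\gamma_K$, so the standard inverse estimate for the piecewise polynomial $J_{m+k}u-u_h$ gives $|\gamma_K|^2|K|\lesssim h_K^{-2k}\|D^m(J_{m+k}u-u_h)\|_{L^2(K)}^2$ at once — this is exactly how the paper proceeds — and your quasi-uniformity caveat in Step 3 is likewise implicit in the paper's replacement of $h_K^{-2k}$ by $h^{-2k}$.
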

\begin{proof}  By the condition H5,  we let $\mathfrak{N}$ denote the multi-index set such
that $|\kappa|=m+k$ for any $\kappa\in \mathfrak{N}$ and that
\begin{equation}
\frac{\pa^{\kappa}v_h|_K}{\pa x^{\kappa}}\equiv 0\text{ for any }
K\in\cT_h \text{ and }v_h\in V_h \text{ while }
\|\frac{\pa^{\kappa}u}{\pa x^{\kappa}}\|_{L^2(\Om)}\not=0.
\end{equation}
Let $J_{m+k}$ be defined as in \eqref{QuasiInterpolation1} and \eqref{QuasiInterpolation2}.
 It follows from the triangle inequality and the piecewise
inverse estimate that
\begin{equation}
\begin{split}
&\sum\limits_{\kappa\in \mathfrak{N}} \|\frac{\pa^{\kappa}u}{\pa
x^{\kappa}}\|_{L^2(\Om)}^2=\sum\limits_{\kappa\in
\mathfrak{N}}\sum\limits_{K\in \cT_h}
\|\frac{\pa^{\kappa}(u-u_h)}{\pa x^{\kappa}}\|_{L^2(K)}^2\\[0.3ex]
&\quad \leq 2 \sum\limits_{\kappa\in \mathfrak{N}}\sum\limits_{K\in
\cT_h} \big(\|\frac{\pa^{\kappa}(u-J_{m+k}u)}{\pa
x^{\kappa}}\|_{L^2(K)}^2+ \|\frac{\pa^{\kappa}(J_{m+k}u-u_h)}{\pa
x^{\kappa}}\|_{L^2(K)}^2\big)\\[0,3ex]
&\quad\lesssim
\|D_h^{m+k}(u-J_{m+k}u)\|^2_{L^2(\Om)}+h^{-2k}\|D_h^{m}(J_{m+k}u-u_h)\|_{L^2(\Om)}^2.
\end{split}
\end{equation}
The estimate of \eqref{QuasiInterpolation4} and the triangle
inequality lead to
\begin{equation}
\begin{split}
&\sum\limits_{\kappa\in \mathfrak{N}} \|\frac{\pa^{\kappa}u}{\pa
x^{\kappa}}\|_{L^2(\Om)}^2\lesssim
\|D_h^{m+k}(u-J_{m+k}u)\|^2_{L^2(\Om)}+h^{-2k}\|D_h^{m}(u-u_h)\|_{L^2(\Om)}^2.
\end{split}
\end{equation}
Finally it follows from  \eqref{QuasiInterpolation3} that
\begin{equation}
h^{2k}\sum\limits_{\kappa\in \mathfrak{N}}
\|\frac{\pa^{\kappa}u}{\pa x^{\kappa}}\|_{L^2(\Om)}^2\lesssim
\|D_h^{m}(u-u_h)\|_{L^2(\Om)}^2
\end{equation}
when the meshsize is small enough, which completes the proof.
\end{proof}

\begin{remark}  Under  the condition H5, a similar argument can prove
the following general saturation conditions:
\begin{equation}
h^{k+m-\ell}\lesssim \|D_h^\ell (u-u_h)\|_{L^2(\Om)}, \ell=0, 1, \cdots, m.
\end{equation}
\end{remark}

Next, we prove the condition H5 for various
 element in  literature.
 \begin{enumerate}
\item The Morley-Wang-Xu element.  Since $D^{m+1}_h v_h\equiv 0$ for all $v_h\in V_h$ for this family of elements and $v\equiv 0$
 if $D^{m+1}v\equiv 0$ for any $v\in V\cap H^{m+1}(\Omega)$, the condition H5 holds.
 \item The enriched Crouzeix-Raviart element. Let $\pa_{12,h}$ denote the piecewise counterpart of the
differential operator $\frac{\pa^2}{\pa x\pa y}$.  We have
$\pa_{12,h}v_h\equiv 0$ for any $v_h\in V_h$.  We only consider the case where
$\Omega=[0,1]^2$ and $u\in H_0^1(\Omega)$. If $ \|\frac{\pa^2 v}{\pa x\pa y}\|_{L^2(\Omega)}$
  vanishes for $v\in V\cap H_0^2(\Omega)$. Then, $v$ should be of the
   form $v(x,y)=f(x)+g(y)$, where $f(x)$ is some function of the variable $x$
   and $g(y)$ is some function of the variable $y$. Now the homogenous boundary
   condition indicates  that $f(x)\equiv C_1$ and $g(y)\equiv C_2$ for
   some  constants $C_1$ and $C_2$, which in turn concludes that $v\equiv 0$. This proves the condition
   $H5$.
\item The  same argument applies to the nonconforming $Q_1$ element, the  enriched nonconforming rotated $Q_1$ element, and the
 conforming $Q_1$ element  in any  dimension.
 \end{enumerate}

\subsection{The saturation condition where  consistency errors are dominant}
In this subsection, we  prove the saturation condition  for the case
where  consistency errors are dominant. As usual it is very
  complicated  to give an abstract estimate for consistency errors in
a unifying way.   Therefore, for ease of presentation,  we shall
only  consider  the new first order nonconforming element proposed in this paper.
However, the idea can be extended to other nonconforming finite
element methods.

\begin{figure}[h]
\begin{center}
\setlength{\unitlength}{0.25cm}
\begin{picture}(26,13.8)(0,0)
\put(10,1.3){\begin{picture}(22,13)(0,0)
\put (0,0){\line(1,0){10}}
\put (0,0){\line(0,1){10}}
\put(10,0){\line(-1,1){10}}
\put (0,0){\line(-1,0){10}}
\put(-10,0){\line(1,1){10}}
\put(-1,7){e}
\put(-3.33,0){\circle*{1}}
\put(-6.67,0){\circle*{1}}
\put(3.33,0){\circle*{1}}
\put(6.67,0){\circle*{1}}
\put(3.33,6.67){\circle*{1}}
\put(6.67,3.33){\circle*{1}}
\put(-3.33,6.67){\circle*{1}}
\put(-6.67,3.33){\circle*{1}}
\put(2.5, 2.5){\circle*{1}}
\put(-2.5, 2.5){\circle*{1}}
\put(-14,0){(-1,0)}
\put(10,0){(1,0)}
\put(-1,11){(0,1)}
\put(0.6, 3.6){$(\frac{1}{4}, \frac{1}{4})$}
\put(-4.6, 3.6){$(\frac{-1}{4}, \frac{1}{4})$}

\end{picture}
}

\end{picture}
\end{center}
\caption{Reference Edge patch and degrees of freedom for $v_e$}\label{figure1}
\end{figure}
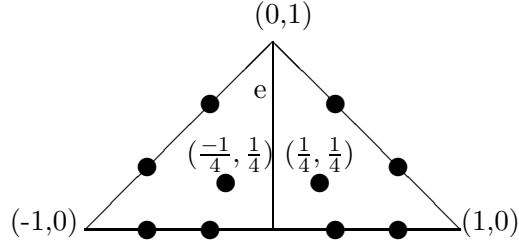

 In order to give lower bounds of consistency errors, given any edge ( boundary and interior) $e$, we construct
 functions $v_e\in V_h $ such that:
 \begin{enumerate}
 \item $v_e$ vanishes on $\Omega\backslash\omega_e$;
 \item $v_e$ vanishes on two Gauss-Legendre points of the other  four edges than $e$ of $\omega_e$;
 \item $v_e$  vanishes at   two interior  points of $\omega_e$, see points $(\frac{1}{4}, \frac{1}{4})$ and   $(\frac{-1}{4}, \frac{1}{4})$  in Figure \ref{figure1} for examples of the reference edge patch;
 \item  $\int_e[v_e]sds=\mathcal{O}(h^2)\not= 0$.
 \end{enumerate}
See Figure \ref{figure1} for the reference edge patch and degrees of freedom for $v_e$. Note that such a function can be found. In fact,
 for the reference edge patch in Figure \ref{figure1}, a  direct calculation shows that there exists  a function $v_e\in V_h$ such that $\int_e[v_e]sds=0.1715\not=0$.

Let $\Pi_e^1$ be the $L^2$ projection from $L^2(e)$ to $P_1(e)$.  Since $\int_e[v_h]ds=0$ for any edge $e$ of $\cT_h$ and $v_h\in V_h$,
 it follows that
\begin{equation}\label{New}
\sum\limits_{K\in\cT_h}\int_{\pa K}\frac{\pa u}{\pa \nu}v_hds=\sum\limits_{e}\int_e\frac{\pa u}{\pa \nu}[v_h]ds
=\sum\limits_{e} \int_e\frac{\partial}{\partial \tau}\big(\Pi_e^1\frac{\pa u}{\pa \nu}\big)[v_h]sds+\sum\limits_{e}\int_e(I-\Pi_e^1)\frac{\pa u}{\pa \nu}[v_h]ds.
\end{equation}
Define
\begin{equation}
v_h=\sum\limits_{e}v_e\frac{\partial}{\partial \tau}\big(\Pi_e^1\frac{\pa u}{\pa \nu}\big).
\end{equation}
Since $\frac{\partial}{\partial \tau}\big(\Pi_e^1\frac{\pa u}{\pa \nu}\big)$ are constants,  definitions of $v_e$ yield
\begin{equation*}
\sum\limits_{e} \int_e\frac{\partial}{\partial \tau}\big(\Pi_e^1\frac{\pa u}{\pa \nu}\big)[v_h]sds
\geq Ch\sum\limits_{e}\|\frac{\partial}{\partial \tau}\big(\Pi_e^1\frac{\pa u}{\pa \nu}\big)\|_{L^2(e)}^2,
\end{equation*}
and
\begin{equation*}
\|\nabla_h v_h\|_{L^2(\Omega)}\leq C h^{-1/2}\bigg( \sum\limits_{e}\|\frac{\partial}{\partial \tau}\big(\Pi_e^1\frac{\pa u}{\pa \nu}\big)\|_{L^2(e)}^2\bigg)^{1/2}.
\end{equation*}
A substitution of these two inequalities into \eqref{New} leads to
\begin{equation}
 \sup\limits_{0\not=v_h\in V_h}\frac{\sum\limits_{K\in\cT_h}\int_{\pa K}\frac{\pa u}{\pa \nu}v_hds}{\|\nabla_hv_h\|_{L^2(\Omega)}}
  \geq C_1h\|\nabla^2u\|_{L^2(\Omega)}-C_2h^{1+s}|u|_{H^{2+s}(\Om)},
\end{equation}
provided that $u\in H^{2+s}(\Omega)$ with $0<s\leq 1$ for some
positive constants $C_1$ and $C_2$. Since
$\|\nabla^2u\|_{L^2(\Omega)}$ can not vanish, this proves the
saturation condition.
\begin{remark} Thanks to two nonconforming bubble functions in each element,  a similar argument is able to
  show a corresponding result for the Wilson element \cite{ShiWang10,Wilson73}.
\end{remark}
  \section{The comment for  the saturation condition of the singular case}\label{secA.6}
   We need the concept of the interpolation space.
  Let $X$, $Y$ be a pair of normed linear spaces.   We shall assume that $Y$ is continuously
embedded in $X$ with $Y\subset X$ and $\|\cdot\|_X\lesssim \|\cdot\|_Y$. For any $t\geq 0$,  we define the $K-$functional
\begin{equation}
K(f,t)=K(f,t,X,Y)=\inf\limits_{g\in Y}\|f-g\|_X+t|g|_Y,
\end{equation}
where $\|\cdot\|_X$ is the norm on $X$ and $|\cdot|_Y$ is a
semi-norm on $Y$. The function $K(f,.)$ is defined  on $\R_+$ and is
monotone and concave (being the pointwise infimum of linear
functions). If $0<\theta<1$ and $1<q\leq\infty$, the interpolation
space
 $(X,Y)_{\theta, q}$ is defined as the set of all functions $f\in X$ such
 that \cite{BerghLofstrom,DeVore1998,DeVoreLorentz1993}
 \begin{equation}
 |f|_{(X,Y)_{\theta, q}}=\left\{\begin{array}{l}(\sum\limits_{k=0}^{\infty}[2^{(s+\epsilon)k\theta}K(f,2^{-k(s+\epsilon)})]^q)^{1/q}, 0<q<\infty,\\
 \sup\limits_{k\geq 0}2^{k(s+\epsilon)\theta}K(f, 2^{-k(s+\epsilon)}), q=\infty,
 \end{array}\right.
 \end{equation}
is finite for some $0<s+\epsilon\leq 1$.
\subsection{An abstract theory}
 We assume that $u\in H^{m+s}(\Omega)$ with $0<s<1$ and $V_h$ is
 some nonconforming or conforming approximation space to the space $H^m(\Om)$.

Then the following conditions imply in some sense the saturation
condition for the singular case:
\begin{enumerate}
\item[H6.]  There exists a piecewise  polynomial space $V_{m+1,h}^c\subset H^{m+1}(\Om)$
 such that $ V_{m+1,h}^c\subset V_{m+1,h/2}^c$ when $\cT_{h/2}$
    is some nested conforming refinement of $\cT_h$;
\item[H7.] There holds the following Berstein inequality
 \begin{equation}\label{Bernstein}
 |v|_{H^{m+s+\epsilon}(\Omega)}\lesssim
 h^{-(s+\epsilon)}|v|_{H^m(\Omega)}\text{ for any }v\in V_{m+1,h}^c;
  \end{equation}
  \item[H8.]  There exists some quasi-interpolation operator $\Pi^c: V_h\rightarrow V_{m+1, h}^c$ such that
  \begin{equation}
  \|D^m(u-\Pi^c u_h)\|_{L^2(\Om)}\lesssim h^{s+\epsilon}
  \end{equation}
  provided that $\|D_h^m(u-u_h)\|_{L^2(\Om)}\lesssim h^{s+\epsilon}$
  with $\epsilon>0$ and $s+\epsilon\leq 1$.
\end{enumerate}

\begin{theorem} Suppose the eigenfunction $u\in H^{m+s}(\Om)$
with $0<s<1$. Under conditions H6--H8, there exist meshes such that
the following saturation condition holds
\begin{equation}
h^s\lesssim \|D_h^m(u-u_h)\|_{L^2(\Om)}.
\end{equation}
\end{theorem}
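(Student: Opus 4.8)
The plan is to argue by contradiction, using the fact that H6--H8 are tailored so that an overly accurate discrete error forces the eigenfunction into a smoothness class strictly finer than $H^{m+s}(\Om)$. I realize ``there exist meshes'' by the family of uniform nested refinements $\cT_{h_0},\cT_{h_0/2},\cT_{h_0/4},\dots$ of a fixed coarse mesh $\cT_{h_0}$: these are conforming refinements, so the conforming spaces $V_{m+1,h}^c$ of H6 form an increasing multiresolution sequence. I also read the hypothesis $u\in H^{m+s}(\Om)$, $0<s<1$, in the sharp sense appropriate to a genuine singular eigenfunction, namely $u\notin H^{m+s'}(\Om)$ for every $s'>s$; without this sharpness no lower bound of the stated form can be expected.

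First I would negate the conclusion in the form consumed by H8: suppose that, along this refinement family, $\|D_h^m(u-u_h)\|_{L^2(\Om)}\lesssim h^{s+\epsilon}$ for some $\epsilon\in(0,1-s]$. (Strictly this is stronger than the literal negation of $h^s\lesssim\|D_h^m(u-u_h)\|_{L^2(\Om)}$; bridging ``not $\gtrsim h^s$'' to ``$\lesssim h^{s+\epsilon}$ with a genuine exponent'' is one cosmetic point to dispatch, e.g. by stating the saturation claim, as is customary here, as the absence of any algebraic gain $O(h^{s+\epsilon})$.) Then H8 produces a conforming near-best approximant $w_h:=\Pi^c u_h\in V_{m+1,h}^c$ with
\[
\inf_{v\in V_{m+1,h}^c}|u-v|_{H^m(\Om)}\le|u-w_h|_{H^m(\Om)}\lesssim h^{s+\epsilon}.
\]

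Next I would invoke the classical Bernstein (inverse) theorem of approximation theory for the multiresolution $\{V_{m+1,h}^c\}$. Writing $E_k(u):=\inf_{v\in V_{m+1,h_0 2^{-k}}^c}|u-v|_{H^m(\Om)}$, the bound above reads $E_k(u)\lesssim 2^{-k(s+\epsilon)}$. Combined with the Jackson estimate $E_k(v)\lesssim 2^{-k(s+\epsilon)}|v|_{H^{m+s+\epsilon}(\Om)}$ (valid since $s+\epsilon\le1$, from the $O(h)$ conforming interpolation error in $H^m$ for $H^{m+1}$ functions together with a $K$-functional argument) and the Bernstein inequality H7, the standard identification of approximation spaces \cite{DeVoreLorentz1993,BerghLofstrom,DeVore1998} gives $u\in(H^m(\Om),H^{m+s+\epsilon}(\Om))_{1,\infty}$. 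Since $(X,Y)_{1,\infty}\hookrightarrow(X,Y)_{\theta,2}$ for every $\theta<1$ and $(H^m(\Om),H^{m+s+\epsilon}(\Om))_{\theta,2}=H^{m+\theta(s+\epsilon)}(\Om)$, choosing $\theta<1$ with $\theta(s+\epsilon)>s$ yields $u\in H^{m+s'}(\Om)$ for some $s'>s$, contradicting the sharp hypothesis on $u$. Hence no such $\epsilon$ exists, $\|D_h^m(u-u_h)\|_{L^2(\Om)}$ is not $O(h^{s+\epsilon})$, and the saturation condition $h^s\lesssim\|D_h^m(u-u_h)\|_{L^2(\Om)}$ holds along the refinement family.

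The main obstacle, beyond the $o$-versus-power nuisance, is the matching Jackson--Bernstein pair: I must check that $\{V_{m+1,h}^c\}$ really supports a direct estimate at smoothness $m+s+\epsilon$ and that the resulting approximation space coincides with the claimed real-interpolation space. For this I would lean on H6 (nestedness, so that $\{E_k(u)\}$ is a legitimate multiresolution error sequence and the telescoping arguments apply), on H7 (the Bernstein half), and on the fact that $V_{m+1,h}^c$ contains the usual conforming Lagrange/Hermite interpolant of $H^{m+1}$ functions (the Jackson half). The transfer from ``$u_h$ is accurate in the broken energy norm'' to ``$\Pi^c u_h$ is accurate in the conforming norm'' is exactly H8 and costs nothing, so all the real work sits in the abstract approximation-theoretic identification, which I would quote rather than reprove.
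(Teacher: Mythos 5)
Your proposal is correct and follows essentially the same route as the paper: argue by contradiction from $\|D_h^m(u-u_h)\|_{L^2(\Om)}\lesssim h^{s+\epsilon}$, use H8 to transfer this into a Jackson-type direct estimate in the nested conforming spaces of H6, pair it with the Bernstein inequality H7, and conclude via the real-interpolation identification that $u$ would lie in $H^{m+s'}(\Om)$ for some $s'>s$, contradicting the (sharp) regularity assumption. The only difference is cosmetic: you quote the abstract Jackson--Bernstein/approximation-space theorem, whereas the paper carries out the $K$-functional telescoping estimate explicitly following DeVore--Lorentz, and you make explicit the sharpness of $u\in H^{m+s}(\Om)$ and the ``not $\gtrsim h^s$'' versus ``$\lesssim h^{s+\epsilon}$'' reading that the paper uses implicitly.
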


\begin{proof}
We assume that the saturation condition $h^{s}\lesssim
\|D_h^m(u-u_h)\|_{L^2(\Omega)}$ does not hold
  for any mesh $\cT_h$ with the meshsize $h$.  In other word, we have
  \begin{equation}\label{A.34}
  \|D_h^m(u-u_h)\|_{L^2(\Omega)}\lesssim h^{s+\epsilon},
  \end{equation}
  for some $\epsilon>0$.  In the following, we assume that $s+\epsilon\leq 1$.
By the condition H8,  we have
 \begin{equation}\label{Jackson}
 \begin{split}
 \inf\limits_{v\in V_{m+1,h}^c}\|D^m(u-v)\|_{L^2(\Omega)}&\lesssim \|D_h^m(u-\Pi^c u_h)\|_{L^2(\Omega)}\lesssim h^{s+\epsilon}.
\end{split}
 \end{equation}
 Take $X=H^m(\Omega)$ and $Y=H^{m+s+\epsilon}(\Omega)$.  The inequality \eqref{Jackson} is the
usual Jackson inequality and the inequality \eqref{Bernstein} is the
Berstein inequality  in the  context of the approximation theory
\cite{DeVoreLorentz1993,DeVore1998}.  We can follow the idea of
\cite[Theorem 5.1 , Chapter 7]{DeVoreLorentz1993} to estimate terms
like $K(u, 2^{-{\ell}(s+\epsilon)})$ for any positive integer $\ell$. In fact,
let $\varphi_k\in V_{m+1, 2^{-k(s+\epsilon)}}^c$ be the best approximation to
$u$ in the sense that
$\|D^m(u-\varphi_k)\|_{L^2(\Omega)}=\inf\limits_{v\in
V_{m+1,2^{-k(s+\epsilon)}}^c}\|D^m(u-v)\|_{L^2(\Omega)}$, $k\geq 1$. Let
$\psi_k=\varphi_k-\varphi_{k-1}$, $k=1, 2, \cdots,$ where
$\psi_0=\varphi_0=0$.  Then we have
\begin{equation}\label{B8}
\|D^m\psi_k\|_{L^2(\Omega)}\leq
\|D^m(u-\varphi_k)\|_{L^2(\Omega)}+\|D^m(u-\varphi_{k-1})\|_{L^2(\Omega)}\lesssim
2^{-k(s+\epsilon)}.
\end{equation}
Since $\varphi_{\ell}=\sum\limits_{k=0}^{\ell}\psi_k$ and
$|\psi_0|_{H^{m+s+\epsilon}(\Omega)}=0$, it follows from
\eqref{Jackson},  \eqref{Bernstein} and \eqref{B8} that
 \begin{equation}
 \begin{split}
  K(u, 2^{-(s+\epsilon)\ell})&\leq
 \|u-\varphi_{\ell}\|_{H^{m}(\Omega)}+2^{-(s+\epsilon)\ell}|\varphi_{\ell}|_{H^{m+s+\epsilon}}\\[0.3ex]
 &\lesssim
 2^{-(s+\epsilon)\ell}+2^{-(s+\epsilon)\ell}\sum\limits_{k=1}^{\ell}2^{k(s+\epsilon)^2}\|\psi_k\|_{H^m(\Omega)}\\[0.3ex]
 &\lesssim \ell 2^{-(s+\epsilon)\ell}.
 \end{split}
  \end{equation}
  \begin{equation}
  \begin{split}
 |u|_{(H^{m}(\Omega), H^{m+s+\epsilon}(\Omega))_{\theta, 2}}
& =\bigg(\sum\limits_{k=0}^{\infty}\big[2^{k(s+\epsilon)\theta}K(u,2^{-k(s+\epsilon)})\big]^2\bigg)^{1/2}\lesssim \bigg(\sum\limits_{k=0}^{\infty}\big[k2^{k(s+\epsilon)(\theta-1)}\big]^2\bigg)^{1/2}.
 \end{split}
 \end{equation}
 Let $\theta=1-\epsilon_0$ with $\epsilon_0>0$ such that $\epsilon-(s+\epsilon)\epsilon_0>0$.  This leads to
 \begin{equation}
 |u|_{(H^{m}(\Omega), H^{m+s+\epsilon}(\Omega))_{\theta, 2}}
\lesssim \bigg(\sum\limits_{k=0}^{\infty}\big[k2^{-k(s+\epsilon)\epsilon_0}\big]^2\bigg)^{1/2}<\infty.
 \end{equation}
 This proves that $u\in H^{m+(1-\epsilon_0)(s+\epsilon)}(\Omega)$
 which is a proper subspace of $H^{m+s}(\Omega)$ since $\epsilon-(s+\epsilon)\epsilon_0>0$,
 which contradicts with the fact that  we only have the regularity $u\in H^{m+s}(\Omega)$.
\end{proof}

\subsection{Proofs for H6--H8}
It follows from  \cite{Da01} that there exist piecewise polynomial spaces $V_{m+1,h}^c$ with nodal basis over $\cT_h$ such
 that $V_{m+1,h}^c$ are nested and  conforming in the sense that $V_{m+1,h}^c\subset V_{m+1,h/2}^c\subset H^m(\Om)$
 for any $1\leq n$ and $m\leq n$.

 This result  actually proves the conditions H6 and H7. The proof of H8 needs the interpolation of $V_h$  into the conforming finite element space. To this end, we introduce the projection average interpolation operator of \cite{Brenner2003,ShiWang10}.

Let $V_{m+1,h}^c$ be a conforming finite element
space defined by $(K,P_K^c,D^c_K)$, where $D_T^c$ is the vector functional and the components of $D^c_K$ are defined as follows:
for any $v\in C^{\kappa}(K)$
\begin{equation*}
d_{i,K}(v):=\left\{\begin{aligned}
&D_{i,K}v(a_{i,K}) & 1\leq i\leq k_1,\\
&\frac{1}{|F_{i,K}|}\int_{F_{i,K}}D_{i,K}v\diff s & k_1< i\leq k_2,\\
&\frac{1}{|K|}\int_KD_{i,K}v\diff x & k_2< i\leq L,\\
\end{aligned}\right.\leqno(*)
\end{equation*}
where $a_{i,K}$ are points in $K$, $F_{i,K}$ are non zero-dimensional faces of $K$.
$\kappa:=\max\limits_{1\leq i\leq L}k(i)$ where $k(i)$ orders of derivatives used in  degrees of freedom  $D_{i,K}:=\sum\limits_{|\alpha|=k(i)}\eta_{i,\alpha, K}\partial^{\alpha},1\leq i\leq L$,
$\eta_{i,\alpha, K}$  are constants which depend on $i, \alpha,  \text{ and } K$.

Let $\omega(a)$ denote the union of elements that share
point $a$ and $\omega(F)$ the union of elements having in common the face $F$. Let $N(a)$ and $N(F)$ denote the number of elements in $\omega(a)$ and $\omega(F)$, respectively. For any $v\in V_h$, define the projection average interpolation operator $\Pi^c: V_h\rightarrow V_{m+1,h}^c$ by
\begin{enumerate}
        \item for $1\leq i\leq k_1$, if $a_{i,K}\in\partial\Omega$ and $d_{i,K}(\phi)=0$ for any $\phi\in C^{\kappa}(\overline{\Omega})\cap V$,
        then $d_{i,K}(\Pi^c v|_K):=0$; otherwise
        \begin{equation*}
        d_{i,K}(\Pi^c v|_K):=\frac{1}{N(a_{i,K})}\sum_{K'\in\omega(a_{i,K})}D_{i,K}(v|_{K'})(a_{i,K});
        \end{equation*}
        \item for $k_1< i\leq k_2$, if $F_{i,K}\subset\partial\Omega$ and $d_{i,K}(\phi)=0$ for any $\phi\in C^{\kappa}(\overline{\Omega})\cap V$,
        then $d_{i,K}(\Pi^c v|_K):=0$; otherwise
        \begin{equation*}
        d_{i,K}(\Pi^c v|_K):=\frac{1}{N(F_{i,K})}\sum_{K'\in\omega(F_{i,K})}\frac{1}{|F_{i,K}|}\int_{F_{i,K}}D_{i,K}(v|_{K'})(a_{i,K})\diff s;
        \end{equation*}
        \item for $k_2< i\leq L$
        \begin{equation*}
        d_{i, K}(\Pi^c v|_K):=\frac{1}{|K|}\int_{K}D_{i,K}(v|_{K})\diff x.
        \end{equation*}
\end{enumerate}
\begin{lemma} For all nonconforming element spaces under consideration,  there
exists $r\in \mathbb{N}, r\geqslant m$ such that $V_h|_K\subset P_r(K)\subset P^c_K$. Then, for
$m<k\leqslant min\{r+1,2m\}$, $0\leqslant l\leqslant m$,
$\alpha=(\alpha_1,\cdots,\alpha_n)$, it holds that
\begin{equation*}
\begin{aligned}
\|D_h^m(v_h-\Pi^c v_h)\|^2_{L^2(\Omega)}&\lesssim
\sum_{K\in\mathcal{T}_h}\left(\sum^{k-1}_{j=m}h_K^{2(j-m)+1}\sum_{F\subset\partial
K/\partial\Omega}\sum_{|\alpha|=j}\|[\partial^{\alpha}v_h]\|^2_{0,F}\right.\\
&\left. +h_K\sum_{F\subset\partial
K\cap\partial\Omega}\sum_{|\alpha|=m,\alpha_1<m}\|\frac{\partial^{|\alpha|}v_h}{\partial
\nu_F^{\alpha_1}\partial\tau_{F,2}^{\alpha_2}\cdots\partial\tau_{F,n}^{\alpha_n}}\|^2_{0,F}\right),
\end{aligned}
\end{equation*}
where $\tau_{F,2},\cdots,\tau_{F,n}$ are $n-1$ orthonormal tangent vectors of $F$.
\end{lemma}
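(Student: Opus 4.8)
The estimate to prove is a bound on the energy-norm distance from a nonconforming function $v_h\in V_h$ to its conforming projection-average interpolant $\Pi^c v_h\in V^c_{m+1,h}$, expressed entirely in terms of jumps of derivatives of $v_h$ across interior faces (plus boundary contributions). The natural route is the standard localization–scaling machinery for such interpolation estimates, following \cite{Brenner2003,ShiWang10}: first reduce to a sum of element-local contributions, then on each element bound the difference by the defect of the degrees of freedom, and finally translate that defect into face jumps.

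First I would write $D^m_h(v_h-\Pi^c v_h)$ elementwise and insert, on each $K$, an arbitrary local polynomial $p\in P^c_K\supset V_h|_K$; since $\Pi^c$ reproduces polynomials in $P^c_K$ that are globally conforming, one reduces (after a triangle inequality and the density/approximation reasoning already invoked for the operator $J$) to estimating $\|D^m_K(w_h-\Pi^c w_h|_K)\|_{L^2(K)}$ for $w_h=v_h-p$. Next, on the reference element $\hat K$, by norm equivalence on the finite-dimensional space $P^c_K$ one controls this by the sum over degrees of freedom $|d_{i,K}(w_h|_K)-d_{i,K}(\Pi^c w_h|_K)|$. By the very definition of the projection-average operator $\Pi^c$, the $i$-th such difference is an average over the patch $\omega(a_{i,K})$ or $\omega(F_{i,K})$ of the jumps $D_{i,K}(v_h|_K)-D_{i,K}(v_h|_{K'})$ between $K$ and its neighbours $K'$ (and for boundary faces/points where the corresponding conforming degree of freedom vanishes, of the one-sided trace of the appropriate tangential-and-normal derivative combination). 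So each local defect is bounded by a finite sum of $L^2(F)$-norms of $[\partial^\alpha v_h]$ over faces $F$ of $K$, for $m\le|\alpha|\le k-1$.

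Then a scaling argument supplies the powers of $h_K$: a degree-of-freedom value that is a face integral of a $j$-th order derivative scales, when passed through the $L^2$-bound on $D^m_K$ of a polynomial, like $h_K^{j-m}$ in amplitude and carries a factor $h_K^{1/2}$ from the face measure when one replaces pointwise/normalized functionals by $\|\cdot\|_{0,F}$ via a trace/inverse inequality on the finite-dimensional polynomial space; squaring and summing over $K$ (each interior face counted a bounded number of times, thanks to shape-regularity) yields exactly the stated $h_K^{2(j-m)+1}$ weights on the interior-face terms and the $h_K$ weight on the boundary terms. The shape-regularity hypothesis also makes all the hidden constants mesh-independent, as required by the paper's $\lesssim$ convention.

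The main obstacle is the bookkeeping at the boundary and at low-dimensional subsimplexes. One must check, for each nonconforming space under consideration, that when a conforming degree of freedom $d_{i,K}$ is set to zero by $\Pi^c$ because $d_{i,K}(\phi)=0$ for all admissible $\phi\in C^\kappa(\bar\Omega)\cap V$, the resulting defect $d_{i,K}(v_h|_K)$ is still controlled by a \emph{tangential-type} derivative trace of the form $\partial^{|\alpha|}v_h/(\partial\nu_F^{\alpha_1}\partial\tau_{F,2}^{\alpha_2}\cdots)$ with $\alpha_1<m$ — i.e. that the purely normal $m$-th derivative never appears on the boundary. This is where the hypothesis $V_h|_K\subset P_r(K)\subset P^c_K$ with $r\ge m$ and the specific structure of the Morley–Wang–Xu-type degrees of freedom (which fix precisely the normal derivatives up to order $m-1$ across faces) must be used to line up the available jump quantities with the ones on the right-hand side; handling all the dimensions $i=1,\dots,n$ of subsimplexes uniformly, rather than just faces, is the technically delicate part. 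Everything else is routine scaling and finite-dimensional norm equivalence, so I would present those steps briefly and concentrate the detailed argument on this boundary/low-codimension alignment.
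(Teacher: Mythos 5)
Your overall strategy (localize to elements, use unisolvence of the conforming degrees of freedom $d_{i,K}$ on $P^c_K$, rewrite the dof defects of $\Pi^c$ as patch averages of inter-element differences, then scale) is exactly the machinery behind the references the paper itself invokes for this lemma (Brenner's enriching-operator estimate for $m=1$ and Shi--Wang, Lemma 5.6.4), so the route is the intended one. However, there is a genuine gap in the middle step: you assert that ``each local defect is bounded by a finite sum of $L^2(F)$-norms of $[\partial^{\alpha}v_h]$ with $m\le|\alpha|\le k-1$'' as if this followed from the definition of $\Pi^c$ and scaling alone. It does not. The conforming dofs of $P^c_K$ necessarily include derivatives of orders $0,\dots,m-1$ (e.g.\ vertex values when $m=1$, vertex values and first derivatives for a $C^1$-conforming target when $m=2$), so the telescoped patch differences produce jumps $[\partial^{\alpha}v_h]$ of order $|\alpha|<m$, which do not appear on the right-hand side of the lemma. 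Turning them into order-$\ge m$ jumps with the weights $h_K^{2(j-m)+1}$ requires the weak-continuity constraints built into $V_h$ (the mean-zero jump conditions $\int_f[\partial^{m-i}v_h/\partial\nu_{n-i,f}^{m-i}]\,df=0$, $\int_F[v_h]\,dF=0$, nodal continuity for Wilson, etc.) combined with face-wise Poincar\'e/inverse estimates for polynomial jumps; the same mechanism, via the boundary conditions encoded in $V_h$, is what produces the purely tangential traces ($\alpha_1<m$) in the boundary term. This ingredient is not optional bookkeeping: for an arbitrary piecewise polynomial not in $V_h$ (say the indicator function of a single element, $m=1$) the right-hand side of the stated estimate vanishes while the left-hand side does not, so any proof that never uses $v_h\in V_h$ beyond ``$v_h|_K\in P_r(K)$'' cannot be correct.

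A secondary remark: the insertion of a local polynomial $p\in P^c_K$ at the start buys you nothing here (a globally smooth $p$ leaves all jumps unchanged and is annihilated in the defects anyway, while an element-dependent $p$ destroys the telescoping), so that reduction can be dropped; the element-local quantity $v_h|_K-\Pi^c v_h|_K\in P^c_K$ is already the object to which norm equivalence and scaling apply. With the weak-continuity conversion step added (and your boundary alignment discussion retained), the argument matches the ``slight modification of Shi--Wang, Lemma 5.6.4'' that the paper's proof consists of.
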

\begin{proof}
Since $V_h|_K\subset P_r(K)\subset P^c_K$,
a slight modification of the argument in \cite[Lemma 5.6.4]{ShiWang10} can prove the desired result;
 see also \cite{Brenner2003} for the proof of  the nonconforming linear element with $m=1$.
\end{proof}

The remaining proof is based on bubble function techniques, see \cite{CarstensenHu07}  for a posteriori error analysis of second order problems,
 see \cite{Gudi(2010),HuShi(2009)}  for a posteriori error analysis of fourth order problems.
 Let $v_h=u_h$ in the above lemma. Such an analysis leads to
  \begin{equation}
  \|D_h^m(\Pi u_h-u_h)\|_{L^2(\Om)}\lesssim \|D_h^m(u-u_h)\|_{L^2(\Om)}\lesssim h^{s+\epsilon}.
  \end{equation}

\end{document}